\providecommand*{\toclevel@titlech}{0}\edef\toclevel@authorch{\the\numexpr\toclevel@titlech+1}\makeatother
\newcommand{\R}{{\mathbb R}}
\newcommand{\be}[1]{\begin{equation}\label{#1}}
\newcommand{\ee}{\end{equation}}
\renewcommand{\(}{\left(}
\renewcommand{\)}{\right)}
\newcommand{\ird}[1]{\int_{\R^d}{#1}\,dx}
\newcommand{\nrm}[2]{\left\|{#1}\right\|_{#2}}
\newcommand{\nrmx}[2]{\left\|{#1}\right\|_{\mathrm L^{#2}(\R^d)}}
\newcommand{\bangle}[1]{\langle #1\rangle}
\newcommand{\irdx}[1]{\int_{\R^d}{#1}\,dx}
\newcommand{\irdv}[1]{\int_{\R^d}{#1}\,dv}
\newcommand{\irdxv}[1]{\iint_{\R^d\times\R^d}#1\,dx\,dv}
\newcommand{\nrmxv}[2]{\|#1\|_{\mathrm L^{#2}(\R^d\times\R^d,\,dx\,dv)}}
\newcommand{\nrmmu}[2]{\|#1\|_{\mathrm L^{#2}(\R^d\times\R^d,\,d\mu)}}
\newcommand{\nrmphi}[2]{\|#1\|_{\mathrm L^{#2}(\R^d,\,e^\phi\,dx)}}
\renewcommand{\qed}{\hfill\ $\square$}
\newcommand{\cpbx}[2]{\parbox{#1cm}{\begin{center}{#2}\end{center}}}
\newcommand{\msc}[1]{\href{https://mathscinet.ams.org/mathscinet/search/mscbrowse.html?sk=default&sk=#1&submit=Search}{#1}}
\definecolor{darkgreen}{rgb}{.0,.4,.2}
\title*{\texorpdfstring{$\mathrm L^2$}{L2} Hypocoercivity methods for kinetic Fokker-Planck equations with factorised Gibbs states}
\titlerunning{\texorpdfstring{$\mathrm L^2$}{L2} Hypocoercivity methods for kinetic Fokker-Planck equations with factorised Gibbs states}
\authorrunning{E.~Bouin, J.~Dolbeault, and L.~Ziviani}
\author{Emeric Bouin \and Jean Dolbeault \and Luca Ziviani}
\institute{Emeric Bouin \and Jean Dolbeault \and Luca Ziviani \at CEREMADE (CNRS UMR n$^\circ$ 7534), PSL university, Universit\'e Paris-Dauphine, Place de Lattre de Tassigny, 75775 Paris 16, France\\
\email{bouin@ceremade.dauphine.fr} (E.~Bouin), \email{dolbeaul@ceremade.dauphine.fr} (J.~Dolbeault), \email{luca.ziviani@dauphine.eu} (L.~Ziviani)\\
\url{https://www.ceremade.dauphine.fr/~bouin/} (E.~Bouin), \url{https://www.ceremade.dauphine.fr/~dolbeaul/} (J.~Dolbeault), \url{https://www.ceremade.dauphine.fr/fr/membres/detail-cv/profile/luca-ziviani.html} (L.~Ziviani)}
\begin{document}
\maketitle\vspace*{-2cm}
\thispagestyle{empty}

%%%%%%%%%%%%%%%%%%%%%%%%%%%%%%%%%%%%%%%%%%%%%%%%%%%%%%%%%%%%%%%%%%%%%%

\abstract{This contribution deals with $\mathrm L^2$ hypocoercivity methods for kinetic Fokker-Planck equations with integrable local equilibria and a \emph{factorisation} property that relates the Fokker-Planck and the transport operators. Rates of convergence in presence of a global equilibrium, or decay rates otherwise, are estimated either by the corresponding rates in the diffusion limit, or by the rates of convergence to local equilibria, under moment conditions. On the basis of the underlying functional inequalities, we establish a classification of decay and convergence rates for large times, which includes for instance sub-exponential local equilibria and sub-exponential potentials.
\\[4pt]
\noindent\emph{MSC (2020):} Primary: \msc{82C40}. Secondary: \msc{35H10}, \msc{35K65}, \msc{35Q84}, \msc{76P05}, \msc{35P15}.
}
\keywords{Hypocoercivity, linear kinetic equations, entropy - entropy production inequalities, Fokker--Planck operator, transport operator, diffusion limit, Nash inequality, Caffarelli-Kohn-Nirenberg inequality, Hardy-Poincar\'e inequality, weighted Poincar\'e inequality, Poincar\'e inequality}

\begin{flushright}\emph\today\end{flushright}
\newcommand{\nocontentsline}[3]{}
\newcommand{\tocless}[2]{\bgroup\let\addcontentsline=\nocontentsline#1{#2}\egroup}
%%%%%%%%%%%%%%%%%%%%%%%%%%%%%%%%%%%%%%%%%%%%%%%%%%%%%%%%%%%%%%%%%%%%%%

%%%%%%%%%%%%%%%%%%%%%%%%%%%%%%%%%%%%%%%%%%%%%%%%%%%%%%%%%%%%%%%%%%%%%%
%%%%%%%%%%%%%%%%%%%%%%%%%%%%%%%%%%%%%%%%%%%%%%%%%%%%%%%%%%%%%%%%%%%%%%
\section{Introduction}\label{Sec:Introduction}

\emph{Hypocoercivity} refers to the method developed by C. Villani in order to capture large time asymptotics in kinetic equations, see~\cite{Mouhot-Neumann,Mem-villani}, which borrows ideas from H\"ormander's \emph{hypoelliptic} theory and from the \emph{carr\'e du champ} method introduced by D.~Bakry and M.~Emery in~\cite{Bakry1985}. For this reason, the Fisher information plays an important role and, to some extent, we can consider it as an $\mathrm H^1$-theory. Here we shall focus more on the notion of $\mathrm L^2$-hypocoercivity inspired by~\cite{MR2215889} and introduced in~\cite{Dolbeault2009511,DMS} in a simple case of a kinetic Fokker-Planck equation, which puts the emphasis on the underlying diffusion limit. The heuristic idea is simple: while the Fokker-Planck diffusion operator controls the rate of convergence towards local equilibria in the velocity space, the equilibration of the spatial density (its convergence to the spatial density of a global equilibrium or its decay when no such equilibrium exists) can be interpreted as a diffusion in the position space, at least in a certain parabolic scaling, which results of the interplay of the diffusion in the velocity direction and the transport and the mixing in the phase space induced by the transport operator. The advantage of this approach is that rates are fully determined by the functional inequalities associated to the diffusion operator on the velocity space and to the diffusion limit in the position space.

\newpage This paper is organized as follows. We start by recalling the abstract $\mathrm L^2$ hypocoercivity method in Theorem~\ref{ThmAbs:DMS} before applying it to the framework of non-Maxwellian local equilibria and a compatible transport operator in Corollary~\ref{Cor:DMS}. Although an adaptation of the standard theory in presence of microscopic and macroscopic coercivity associated respectively to the Fokker-Planck operator and the diffusion limit, this result is new and covers for instance the case of  relativistic transport. This framework is also well adapted to situations with weaker notions of coercivity corresponding to either an external potential with slower growth at infinity or to local equilibria with fatter tails than the Maxwellian. After reviewing various families of interpolation inequalities which dictate the asymptotic behaviours of the solutions to the Fokker-Planck equations, we extend the $\mathrm L^2$ hypocoercivity method to the kinetic Fokker-Planck equations and establish a classification in terms the (microscopic) local equilibria and the (macroscopic) equilibria associated with the Fokker-Planck diffusion limit.

\medskip Reviewing extensively the literature on the asymptotic behaviour of the solutions of Fokker-Planck, degenerate Fokker-Planck and kinetic Fokker-Planck equations goes beyond our scope. Let us simply quote some papers directly related to our methods, with more details to be given later.

Concerning \emph{Fokker-Planck} equations, coercivity for the diffusion operator means spectral gap and Poincar\'e inequality, and thus exponential decay of the solutions. This is a basic example of application of the carr\'e du champ method: see~\cite{MR3155209}. \emph{Weak Poincar\'e} inequalities are natural in the absence of spectral gaps as explained in~\cite{MR4056271} and have been quite systematically explored: see~\cite{MR1856277,MR2381160,MR4265692} and earlier references therein. However, such methods require strong assumptions on the initial data. This is the reason why we adopt an alternative approach based on moments and weighted functional inequalities, where the extreme case corresponds to Nash's inequality in absence of an external potential. See Table~\ref{Table:FP}.

As for \emph{kinetic Fokker-Planck} equations, hypocoercivity primarily refers to the method exposed in~\cite{Mem-villani}. We can also quote~\cite{piazzoli2019relaxation} for a detailed presentation of the commutator method and of Bakry-Emery type computations applied to estimates of the relaxation to equilibrium. More results and further references can also be found in~\cite{doi:10.1142/S0218202518500574}. In~\cite{MR3892316} S.~Hu and X.~Wang introduced a weak hypocoercivity approach \emph{\`a la} Villani, using a weak Poincar\'e inequality, and proved subexponential convergence to equilibrium. This was later extended to a class of degenerate diffusion processes in~\cite{MR4021241} by M.~Grothaus and F.-Y.~Wang using weak Poincar\'e inequalities for the symmetric and antisymmetric part of the generator, with non-exponential rates of convergence. In the same vein, C.~Cao proved quantitative convergence rates for the kinetic Fokker-Planck equation with more general confinement forces in~\cite{MR4069622,MR4278431}.

Alternatively the method of~\cite{DMS} was extended to cases without potentials in~\cite{BDMMS} while cases of weak or very weak potentials were considered respectively in~\cite{BDLS} and~\cite{BDS-very-weak}. Here the idea is to introduce moments and weighted interpolation inequalities to prove non-exponential decay or convergence rates. In these papers the effort has been mostly focused on the role of the external potential and fat tail local equilibria were not taken into consideration. However, it is known from~\cite{mellet_fractional_2011} that fat-tail local equilibria can be responsible of a fractional diffusion limit, which may govern decay rates in the case without external potential as shown in~\cite{Bouin_2022}, but this is not always the case. Non-Maxwellian local equilibria have been less explored than the Maxwellian case, but one has to mention~\cite{Brigati2023KRM, brigati2023construct} for such an extension of the earlier works~\cite{albritton2021variational,cao2023explicit}, with slightly different methods based on weak norms, Lions' lemma and time-averages.

%%%%%%%%%%%%%%%%%%%%%%%%%%%%%%%%%%%%%%%%%%%%%%%%%%%%%%%%%%%%%%%%%%%%%%
%%%%%%%%%%%%%%%%%%%%%%%%%%%%%%%%%%%%%%%%%%%%%%%%%%%%%%%%%%%%%%%%%%%%%%
\section{From microscopic and macroscopic coercivity to hypocoercivity}\label{Sec:Abstract}

Let us start by an expository section which collects some known results and introduces our present purpose. Let us consider the general evolution equation
\be{EqnEvol}
\frac{dF}{dt}+\mathsf TF=\mathsf LF
\ee
where $F$ is the density of a probability distribution defined on a real or complex Hilbert space $\mathscr H$ with scalar product $\bangle{\cdot,\cdot}$ and norm $\|\cdot\|$. We assume that $\mathsf T$ and $\mathsf L$ are two linear operators, respectively anti-Hermitian and Hermitian: $\mathsf T^*=-\,\mathsf T$ and $\mathsf \mathrm L^*=\mathsf L$, where ${}^*$ denotes the adjoint with respect to $\bangle{\cdot,\cdot}$. We are interested in the decay rate of $F$ or in the convergence to a steady state $F_\star$. We assume that $F_\star$ is unique, up to normalization. Since~\eqref{EqnEvol} is linear, we can always replace $F$ by $F-F_\star$ and study the convergence to $0$ of an eventually sign changing function $F$. We have in mind that $\mathsf L$ is an elliptic degenerate operator. If $\Pi$ is the orthogonal projection onto the kernel of $\mathsf L$, we assume that $\mathsf L$ has the \emph{microscopic coercivity} property in the sense that it is \emph{coercive} on $(1-\Pi)\,\mathscr H$, where $1$ is here a shorthand notation for the identity that will make sense in the functional setting of interest. In other words, we claim that
\begin{equation}
\frac12\,\frac d{dt}\,\|F\|^2=\langle\mathsf LF,F\rangle\le-\,\lambda_m\,\|(1-\Pi)F\|^2\label{H1}\tag{H1}
\end{equation}
for some $\lambda_m>0$. This is not enough to conclude that $\|F(t,\cdot)\|$ decays exponentially as we have no decay rate on $\mathrm{Ker}(\mathsf L)$, but if the operators $\mathsf L$ and $\mathsf T$ do not commute, we can hope that some of the decay properties on $(1-\Pi)\,\mathscr H$ are transferred on $\Pi\,\mathscr H$. This points towards the computation of various commutators and the whole machinery of H\"ormander's hypoellipticity theory. A micro/macro approach offers a simpler framework, that has the advantage of clarifying the role played by various functional inequalities in estimating decay rates of $F$. The underlying ideas rely on the formal \emph{macroscopic limit} of the scaled evolution equation
\[
\varepsilon\,\frac{dF}{dt}+\mathsf TF=\frac1\varepsilon\,\mathsf LF
\]
on the Hilbert space $\mathscr H$, which is a typical parabolic scaling when $\varepsilon$ is a small parameter. Using a formal expansion of a solution $F_\varepsilon=F_0+\varepsilon\,F_1+\varepsilon^2\,F_2+{\mathcal{O}(\varepsilon^3)}$ as $\varepsilon\to0_+$ and solving the equation order by order, we obtain
\[
\begin{array}{ll}
\mbox{at order}\,O\big(\varepsilon^{-1}\big):&\quad\mathsf LF_0=0\,,\\[4pt]
\mbox{at order}\,O\big(\varepsilon^0\big):&\quad\mathsf TF_0=\mathsf LF_1\,,\\[4pt]
\mbox{at order}\,O\big(\varepsilon^1\big):&\quad\frac{dF_0}{dt}+\mathsf TF_1=\mathsf LF_2\,.
\end{array}
\]
The first and second equation respectively read as $F_0=\Pi F_0$ and $F_1=-\,(\mathsf T\Pi)\,F_0$. After projection on $\mathrm{Ker}(\mathsf L)$, the third equation becomes $\frac d{dt}\(\Pi F_0\)-\,\Pi\mathsf T\,(\mathsf T\Pi)\,F_0=\Pi\mathsf LF_2=0$ that we can also write as
\be{Diffusion}
\frac{\partial F_0}{\partial t}+(\mathsf T\Pi)^*\,(\mathsf T\Pi)\,F_0=0\,.
\ee
Assuming \emph{macroscopic coercivity}, \emph{i.e.}, the property that the operator $(\mathsf T\Pi)^*\,(\mathsf T\Pi)$ is coercive on $(1-\Pi)\,\mathscr H$, we obtain
\begin{equation}
\frac12\,\frac d{dt}\,\|F_0\|^2=-\,\|(\mathsf T\Pi)\,F_0\|^2\le-\,\lambda_M\,\|F_0\|^2\label{H2}\tag{H2}
\end{equation}
for some $\lambda_M>0$. In order to derive~\eqref{Diffusion}, we implicitly used the fact that all terms are of order $\varepsilon$, which relies on the \emph{parabolic macroscopic dynamics} condition
\begin{equation}
\Pi\mathsf T\Pi\,F=0\,.\label{H3}\tag{H3}
\end{equation}
As in the \emph{hypocoercivity} method of~\cite{DMS}, let us consider the operator
\[\label{A}
\mathsf A:=\big(1+(\mathsf T\Pi)^*(\mathsf T\Pi)\big)^{-1}(\mathsf T\Pi)^*
\]
where the $(\mathsf T\Pi)^*(\mathsf T\Pi)$ term is of course reminiscent of~\eqref{Diffusion}, and, the Lyapunov functional, or \emph{entropy}, 
\begin{equation}\label{H}
\mathsf H[F]:=\frac12\,\|F\|^2+\delta\,\mathrm{Re}\bangle{\mathsf AF,F}\,.
\end{equation}
The parameter $\delta>0$, to be determined, as to be thought as a small parameter so that $\mathsf H[F]$ is a perturbation of $\frac12\,\|F\|^2$. The following estimate is by now classical but deserves some emphasis. Let us consider $G=\mathsf AF$, \emph{i.e.}, the solution of $(\mathsf T\Pi)^*F=G+(\mathsf T\Pi)^*\,\mathsf T\Pi\,G$. As in~\cite[Lemma~1]{DMS}, by a Cauchy-Schwarz inequality, we learn that
\begin{multline*}
\langle\mathsf{TA}F,F\rangle=\langle G,(\mathsf T\Pi)^*\,F\rangle=\|G\|^2+\|\mathsf T\Pi G\|^2=\|\mathsf AF\|^2+\|\mathsf{TA}F\|^2\\
\le\|\mathsf{TA}F\|\,\|(\mathrm{Id}-\Pi)F\|\le\frac1{2\,\mu}\,\|\mathsf{TA}F\|^2+\frac\mu2\,\|(\mathrm{Id}-\Pi)F\|^2\,.
\end{multline*}
Applied with either $\mu=1/2$ or $\mu=1$, this estimate proves that $\|\mathsf AF\|\le\frac12\,\|(\mathrm{Id}-\Pi)F\|$ and $\|\mathsf{TA}F\|\le\|(\mathrm{Id}-\Pi)F\|$.
Incidentally, this proves that
\begin{subequations}
\begin{align}
&\label{Est3}
\left|\langle\mathsf{TA}F,F\rangle\right|=\left|\langle\mathsf{TA}F,(\mathrm{Id}-\Pi)F\rangle\right|\le\|(\mathrm{Id}-\Pi)F\|^2\,,\\
&\label{Est4}
|\langle\mathsf AF,F\rangle|\le\frac12\,\|\Pi F\|\,\|(\mathrm{Id}-\Pi)F\|\le\frac14\,\|F\|^2\,.
\end{align}
\end{subequations}
We read from~\eqref{Est4} that $\mathsf H[F]$ and $\|F\|^2$ are equivalent with
\[
\frac{2-\delta}4\,\|F\|^2\le\mathsf H[F]\le\frac{2+\delta}4\,\|F\|^2\,.
\]
However, the twist introduced in $\mathsf H[F]$ by $\bangle{\mathsf AF,F}$ makes it exponentially decaying in $t$ if $F$ solves~\eqref{EqnEvol}. We can indeed compute
\[\label{D}
-\,\frac d{dt}\mathsf H[F]=\mathsf D[F]
\]
where
\be{D[F]}
\mathsf D[F]:=-\,\langle\mathsf LF,F\rangle+\delta\,\langle\mathsf{AT}\Pi F,F\rangle-\,\delta\,\Big(\mathrm{Re}\langle\mathsf{TA}F,F\rangle-\mathrm{Re}\langle\mathsf{AT}(1-\Pi)F,F\rangle+\mathrm{Re}\langle\mathsf{AL}F,F\rangle\Big)\,.
\ee
By~\eqref{H1}, we know that $-\,\langle\mathsf LF,F\rangle\ge\lambda_m\,\|(1-\Pi)F\|^2$. On the other hand,~\eqref{H2} amounts to
\[
\big\langle(\mathsf T\Pi)^*\,(\mathsf T\Pi)\,F,F\big\rangle\ge\lambda_M\,\|F\|^2\quad\mbox{if}\quad F\in\mathrm{Ker}(\mathsf L)
\]
and, by construction, the operator $\mathsf A$ is therefore such that
\be{gap}
\langle\mathsf{AT}\Pi F,F\rangle\ge\frac{\lambda_M}{1+\lambda_M}\,\|\Pi F\|^2\,.
\ee
The first two terms in the definition of $\mathsf D[F]$ can be combined to prove that
\be{Dprod}
\mathsf D[F]\ge\lambda_m\,\|(1-\Pi)F\|^2+\frac{\delta\,\lambda_M}{1+\lambda_M}\,\|\Pi F\|^2-\,\delta\,\Big(\mathrm{Re}\langle\mathsf{TA}F,F\rangle-\mathrm{Re}\langle\mathsf{AT}(1-\Pi)F,F\rangle+\mathrm{Re}\langle\mathsf{AL}F,F\rangle\Big)\,.
\ee
Under the additional assumption that the last term in the above identity involves only \emph{bounded auxiliary operators} in the sense that 
\begin{equation}
\|\mathsf{AT}(1-\Pi)F\|+\|\mathsf{AL}F\|\le C_M\,\|(1-\Pi)F\|\,,\label{H4}\tag{H4}
\end{equation}
one obtains the \emph{entropy -- entropy production} inequality
\[
\mathsf D[F]\ge\lambda\,\mathsf H[F]
\]
for some explicit constant $\lambda>0$. The precise statement goes as follows. It has been established in~\cite{DMS} in the case of a real Hilbert space $\mathscr H$ and extended to complex Hilbert spaces in~\cite{BDMMS}.
%---------------------------------------------------------------------
\begin{theorem}[\cite{BDMMS,DMS}]\label{ThmAbs:DMS} Let $\mathsf L$ and $\mathsf T$ be closed linear operators in the complex Hilbert space $\big(\mathcal H,\langle\cdot,\cdot\rangle\big)$. We assume that $\mathsf L$ is Hermitian and $\mathsf T$ is anti-Hermitian, and that~\eqref{H1}--\eqref{H4} hold for some positive constants $\lambda_m$, $\lambda_M$, and~$C_M$. Then there is some $\delta_\star\in(0,2)\cap(0,\lambda_m)$ such that, for any $\delta\in(0,\delta_\star)$, there are explicit constants $\lambda>0$ and $\mathscr C>1$ for which, if $F$ solves~\eqref{EqnEvol} with initial datum $F_0\in\mathcal H$, then
\be{Decay:BDMMS}
\mathsf H[F(t,\cdot)]\le\mathsf H[F_0]\,e^{-\lambda\,t}\quad\mbox{and}\quad\|F(t,\cdot)\|^2\le\mathscr C\,e^{-\lambda\,t}\,\|F_0\|^2\quad\forall\,t\ge0\,.
\ee
\end{theorem}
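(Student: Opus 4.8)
The plan is to run the standard entropy--entropy production argument already sketched in the text, turning the inequality~\eqref{Dprod} into $\mathsf D[F]\ge\lambda\,\mathsf H[F]$ by an appropriate choice of $\delta$, and then to integrate the differential inequality. First I would control the three ``error'' terms in~\eqref{Dprod}. The term $\mathrm{Re}\langle\mathsf{TA}F,F\rangle$ is bounded by $\|(1-\Pi)F\|^2$ thanks to~\eqref{Est3}, while $\mathrm{Re}\langle\mathsf{AT}(1-\Pi)F,F\rangle$ and $\mathrm{Re}\langle\mathsf{AL}F,F\rangle$ are bounded by $C_M\,\|(1-\Pi)F\|\,\|\Pi F\|$ (or simply by $C_M\,\|(1-\Pi)F\|\,\|F\|$) using~\eqref{H4} together with the Cauchy--Schwarz inequality, after noting that $\mathsf A=\Pi\mathsf A$ so that $\langle\mathsf{AT}(1-\Pi)F,F\rangle=\langle\mathsf{AT}(1-\Pi)F,\Pi F\rangle$. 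Collecting these estimates, there is a constant $c>0$ (depending only on $C_M$) such that
\[
\mathsf D[F]\ge(\lambda_m-\delta)\,\|(1-\Pi)F\|^2+\frac{\delta\,\lambda_M}{1+\lambda_M}\,\|\Pi F\|^2-c\,\delta\,\|(1-\Pi)F\|\,\|\Pi F\|\,.
\]

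Next I would treat the right-hand side as a quadratic form in the two nonnegative quantities $X=\|(1-\Pi)F\|$ and $Y=\|\Pi F\|$. Provided $\delta<\lambda_m$, the coefficient of $X^2$ is positive, and the quadratic form $(\lambda_m-\delta)\,X^2-c\,\delta\,XY+\frac{\delta\,\lambda_M}{1+\lambda_M}\,Y^2$ is bounded below by $\kappa\,(X^2+Y^2)$ for some $\kappa>0$ as soon as its discriminant condition $c^2\,\delta^2<4\,(\lambda_m-\delta)\,\frac{\delta\,\lambda_M}{1+\lambda_M}$ holds, \emph{i.e.} for $\delta$ small enough; this fixes the threshold $\delta_\star\in(0,2)\cap(0,\lambda_m)$, with $\delta_\star$ and $\kappa$ explicit in terms of $\lambda_m$, $\lambda_M$, $C_M$. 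Since $\|F\|^2=X^2+Y^2$, this yields $\mathsf D[F]\ge\kappa\,\|F\|^2$, and combining with the norm equivalence $\frac{2-\delta}{4}\,\|F\|^2\le\mathsf H[F]\le\frac{2+\delta}{4}\,\|F\|^2$ gives $\mathsf D[F]\ge\lambda\,\mathsf H[F]$ with $\lambda=\frac{4\,\kappa}{2+\delta}>0$.

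Finally, since $-\,\frac{d}{dt}\mathsf H[F]=\mathsf D[F]\ge\lambda\,\mathsf H[F]$, Grönwall's lemma gives $\mathsf H[F(t,\cdot)]\le\mathsf H[F_0]\,e^{-\lambda t}$, and feeding this back into the norm equivalence produces $\|F(t,\cdot)\|^2\le\mathscr C\,e^{-\lambda t}\,\|F_0\|^2$ with $\mathscr C=\frac{2+\delta}{2-\delta}>1$. One technical point deserving care, and which I expect to be the only real subtlety, is the justification that $t\mapsto\mathsf H[F(t,\cdot)]$ is differentiable with derivative given by the formal computation leading to~\eqref{D[F]}: this requires $F_0$ to be taken first in a suitable dense domain on which $\mathsf L$, $\mathsf T$ and the auxiliary operators act with enough regularity, the differential inequality being then extended to all $F_0\in\mathcal H$ by density using the contraction estimate and the boundedness of $\mathsf A$. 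Everything else is the bookkeeping of explicit constants, which is why I would simply refer to~\cite{DMS,BDMMS} for the details rather than reproduce them.
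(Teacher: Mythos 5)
Your proposal is correct and follows essentially the same route as the paper: bound the error terms via~\eqref{Est3},~\eqref{H4} and Cauchy--Schwarz, reduce to a quadratic form in $X=\|(1-\Pi)F\|$ and $Y=\|\Pi F\|$, impose a discriminant condition on $\delta$, and conclude by Gr\"onwall using the equivalence of $\mathsf H[F]$ with $\|F\|^2$. The only cosmetic difference is that you first prove $\mathsf D[F]\ge\kappa\,\|F\|^2$ and then invoke the equivalence, whereas the paper (following the sharpening of \cite{Arnold2021}) estimates $\mathsf D[F]-\lambda\,\mathsf H[F]$ directly, which yields a slightly better explicit $\lambda$.
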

%---------------------------------------------------------------------
The estimates of $\lambda>0$ and $\mathscr C>1$ in~\cite[Proposition~4]{BDMMS} have been improved in~\cite[Proposition~2]{Arnold2021} as follows. With $X:=\|(\mathrm{Id}-\Pi)F\|$ and $Y:=\|\Pi F\|$. Using~\eqref{Est4}, we read from~\eqref{H} that
\[
\mathsf H[F]\le\frac12\(X^2+Y^2\)+\frac\delta2\,X\,Y
\]
while it follows from~\eqref{Dprod},~\eqref{Est3} and~\eqref{H4} that
\[
\mathsf D[F]-\lambda\,\mathsf H[F]\ge\(\lambda_m-\,\delta-\frac\lambda2\)X^2-\,\delta\(C_M+\frac\lambda2\)X\,Y+\(\frac{\delta\,\lambda_M}{1+\lambda_M}-\frac\lambda2\)Y^2\,.
\]
With $K_M:=\frac{\lambda_M}{1+\lambda_M}<1$ and $\delta_\star:=\frac{4\,K_M\,\lambda_m}{4\,K_M+C_M^2}<\lambda_m$, a simple discriminant condition shows that for any $\delta\in(0,\delta_\star)$, the right-hand side is nonnegative for the largest (positive) solution of
\[
\delta^2\(C_M+\frac\lambda2\)^2-4\(\lambda_m-\,\delta-\frac\lambda2\)\(\frac{\delta\,\lambda_M}{1+\lambda_M}-\frac\lambda2\)=0\,.
\]
We refer to~\cite{Arnold2021} for further details and to~\cite{MR3522857} for more considerations on the functional framework.

\medskip In the framework of kinetic equations, $\mathsf T$ and $\mathsf L$ are respectively the \emph{transport operator} and the \emph{collision operator} acting on a distribution function $f(t,x,v)$ where $t\ge0$ is the time, $x$ is the position and $v$ is the velocity. To fix ideas, we shall assume that $x$, $v\in\R^d$ and consider
\begin{itemize}
\item a transport operator defined by the Poisson bracket as
\be{Tkin}
\mathsf Tf:=\nabla_v\mathscr E\cdot\nabla_xf-\nabla_x\mathscr E\cdot\nabla_vf
\ee
corresponding to the Hamiltonian energy
\[
(x,v)\mapsto\mathscr E(x,v):=\frac1\beta\,\bangle v^\beta+\phi(x)\,,
\]
where $\phi$ denotes an external, given potential,
\item a collision operator of Fokker-Planck type given by
\be{Lkin}
\mathsf Lf:=\nabla_v \cdot \(\nabla_vf+v\,\bangle v^{\beta-2}\,f\)\,.
\ee
\end{itemize}
Here we use the notation
\[
\bangle v:=\sqrt{1+|v|^2}\,.
\]
Unless $\beta=2$, our choice of the transport operator differs from the transport operator corresponding to Newton's equations, namely $v\cdot\nabla_x-\nabla_x\phi\cdot\nabla_v$, which has been widely studied in the literature. More general dependences of $\mathcal E$ and $\mathsf L$ on $v$ than $\bangle v^\beta$, with for instance a power law asymptotic growth as $|v|\to+\infty$, could be considered under minor changes. Our purpose is to study the asymptotic behaviour of the solution of
\be{kFP}
\frac{\partial f}{\partial t}+\mathsf Tf=\mathsf Lf\,,\quad f(t=0,\cdot,\cdot)=f_0
\ee
with $\mathsf T$ and $\mathsf L$ given respectively by~\eqref{Tkin} and~\eqref{Lkin} as $t\to+\infty$. With these choices and under the condition that $e^{-\phi}$ is integrable, a remarkable property is that the \emph{Gibbs state}
\be{fstar}
f_\star(x,v):=\frac1Z\,e^{-\mathscr E(x,v)}\quad\mbox{where}\quad Z=\irdx{e^{-\phi}}\irdv{e^{-\frac1\beta\,\bangle v^\beta}}
\ee
is a stationary solution of mass $\nrmxv{f_\star}1=1$. We consider $\mathsf L$ as an operator on $\mathrm L^2\big(\R^d,e^{\bangle v^\beta}dv\big)$ acting on functions depending on the velocity variable $v$ and extend it to the Hilbert space $\mathrm L^2(\R^d\times\R^d,d\mu)$ of functions depending on $x$ and $v$ where
\[
d\mu:=\frac{dx\,dv}{f_\star(x,v)}\,.
\]
Since $f_\star$ is integrable, notice that $\mathrm L^1(\R^d\times\R^d,dx\,dv)\subset\mathrm L^2\big(\R^d\times\R^d,d\mu)$ by a Cauchy-Schwarz inequality. After these preliminaries, we observe that $f_\star$ is local equilibrium, \emph{i.e.}, belongs to $\mathrm{Ker}(\mathsf L)$ which is generated by functions of the type
\be{urho}
f_\rho(x,v):=\frac{\rho(x)\,e^{-\frac1\beta\,\bangle v^\beta}}{\irdv{e^{-\frac1\beta\,\bangle v^\beta}}}
\ee
where $\rho\in\mathrm L^2\big(\R^d,e^{-\phi}\,dx\big)\supset\mathrm L^1(\R^d,dx)$ is an arbitrary function. The property $\mathsf Tf_\star=\mathsf Lf_\star=0$ sometimes appears in the physics literature as a \emph{factorization} property. The orthogonal projector onto $\mathrm{Ker}(\mathsf L)$ is defined as the projection on local equilibria by
\[
\Pi f=f_\rho(x,v)\quad\mbox{where}\quad\rho=\irdv f\,.
\]
Notice that $f$ and $f_\rho$ have the same spatial density because $\irdv{f_\rho}=\rho$. Under the assumption that the measure $e^{-\phi}\,dx$ admits a Poincar\'e inequality, that is, there is some positive constant $\lambda_\phi$ for which
\be{Poincare}
\irdx{|\nabla u|^2\,e^{-\phi}}\ge\lambda_\phi\irdx{|u|^2\,e^{-\phi}}\quad\forall\,u\in\mathscr D(\R^d)\quad\mbox{such that}\quad\irdx{u\,e^{-\phi}}=0\,,
\ee
Theorem~\ref{ThmAbs:DMS} applies as follows.
%---------------------------------------------------------------------
\begin{corollary}\label{Cor:DMS} Assume that $\phi$ is such that~\eqref{Poincare} holds for some $\lambda_1>0$ and $\beta\ge1$. If $f$ solves~\eqref{kFP} for some nonnegative function $f_0\in\mathrm L^2(\R^d\times\R^d,d\mu)$ with $\nrmxv{f_0}1=1$, then for some $\delta>0$, there exists $\lambda>0$ and $\mathscr C>1$ such that~\eqref{Decay:BDMMS} holds with $\|\cdot\|:=\nrmmu\cdot2$. 
\end{corollary}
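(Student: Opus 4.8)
The plan is to check that the operators $\mathsf L$, $\mathsf T$ and the projection $\Pi$ introduced above fit the abstract scheme: I will verify hypotheses \eqref{H1}--\eqref{H4} of Theorem~\ref{ThmAbs:DMS} on $\mathscr H:=\mathrm L^2(\R^d\times\R^d,d\mu)$ and apply that theorem to $F:=f-f_\star$, which again solves \eqref{kFP} thanks to the factorisation $\mathsf Tf_\star=\mathsf Lf_\star=0$ and satisfies $\irdxv F=0$ by conservation of mass under \eqref{kFP}. Three preliminary identities drive the computations. First, since $\nabla_vf_\star=-\,\nabla_v\mathscr E\,f_\star$, one has $\mathsf Lf=\nabla_v\cdot\big(f_\star\,\nabla_v(f/f_\star)\big)$, so $\mathsf L$ is Hermitian and $-\langle\mathsf Lf,f\rangle=\irdxv{f_\star\,|\nabla_v(f/f_\star)|^2}$. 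Second, the vector field $(\nabla_v\mathscr E,-\nabla_x\mathscr E)$ is divergence free and $f_\star$ is invariant along its flow, whence $\mathsf T^*=-\,\mathsf T$. Third, writing $\rho_\star(x):=\irdv{f_\star}=Z_x^{-1}e^{-\phi(x)}$ with $Z_x:=\irdx{e^{-\phi}}$, one gets $\|f_\rho\|_{\mathscr H}^2=\irdx{\rho^2/\rho_\star}=Z_x^{-1}\irdx{u^2\,e^{-\phi}}$ with $u:=\rho/\rho_\star$, so the macroscopic space $\Pi\mathscr H$ is isometric to $\mathrm L^2(\R^d,e^{-\phi}dx)$ through $f_\rho\mapsto u$, and zero mass of $F$ reads $\irdx{u\,e^{-\phi}}=0$.

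Condition \eqref{H1} is a Poincaré inequality in velocity: by the first identity $f_\rho/f_\star=\rho/\rho_\star$ is the average of $f/f_\star$ against the probability measure $f_\star(x,\cdot)\,dv\propto e^{-\frac1\beta\bangle v^\beta}\,dv$, so \eqref{H1} holds with $\lambda_m$ equal to the spectral gap of this measure on $\R^d$; that gap is positive exactly when $\beta\ge1$ (for $\beta>1$ by a Lyapunov-function argument since $|\nabla_v\mathscr E|^2\to\infty$; for $\beta=1$ because $e^{-\bangle v}dv$ still has a spectral gap; it fails for $\beta<1$, which is why the hypothesis $\beta\ge1$ appears). Condition \eqref{H3} follows by parity: using $\nabla_v\gamma=-\nabla_v\mathscr E\,\gamma$ one finds $\mathsf Tf_\rho=\gamma(v)\,\nabla_v\mathscr E\cdot(\nabla_x\rho+\rho\,\nabla_x\phi)$, and since $v\mapsto\nabla_v\mathscr E=v\bangle v^{\beta-2}$ is odd while $\gamma$ is even, $\irdv{\mathsf Tf_\rho}=0$, i.e.\ $\Pi\mathsf T\Pi F=0$.

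For \eqref{H2} I would compute $\|\mathsf T\Pi F\|_{\mathscr H}^2$ explicitly. From $\nabla_x\rho_\star=-\rho_\star\nabla_x\phi$ one gets $\nabla_x\rho+\rho\,\nabla_x\phi=\rho_\star\nabla_xu$, hence $\mathsf Tf_\rho=f_\star\,\nabla_v\mathscr E\cdot\nabla_xu$; integrating the isotropic matrix $\irdv{\gamma\,v\otimes v\,\bangle v^{2\beta-4}}=c_\beta\,\mathrm{Id}$ with $c_\beta:=\tfrac1d\irdv{\gamma\,|v|^2\bangle v^{2\beta-4}}\in(0,\infty)$ gives
\[
\|\mathsf T\Pi F\|_{\mathscr H}^2=c_\beta\irdx{\rho_\star\,|\nabla_xu|^2}=\frac{c_\beta}{Z_x}\,\irdx{|\nabla_xu|^2\,e^{-\phi}}\,.
\]
Applying \eqref{Poincare} to $u$ (licit because $\irdx{u\,e^{-\phi}}=0$) yields \eqref{H2} with $\lambda_M=c_\beta\,\lambda_1$. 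Under the isometry of the first paragraph this also identifies $(\mathsf T\Pi)^*(\mathsf T\Pi)$ with $c_\beta\,\nabla^*\nabla=-\,c_\beta\,\Delta_\phi$ on $\mathrm L^2(e^{-\phi}dx)$, where $\Delta_\phi u:=\Delta u-\nabla\phi\cdot\nabla u$ and $\nabla^*=-\mathrm{div}_\phi$ is the formal adjoint of $\nabla$ in $\mathrm L^2(e^{-\phi}dx)$; consequently $\mathsf A$ acts, in the density representation, as $f\mapsto-\big(1+c_\beta\,\nabla^*\nabla\big)^{-1}\mathrm{div}_\phi\big(\rho_\star^{-1}\irdv{\nabla_v\mathscr E\,f}\big)$.

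The delicate point is \eqref{H4}. For $\mathsf A\mathsf L$, two integrations by parts in $v$ give $\irdv{\nabla_v\mathscr E\,\mathsf Lf}=\irdv{\nabla_v\big(\Delta_v\mathscr E-\tfrac12|\nabla_v\mathscr E|^2\big)\,f}=\irdv{P(v)\,v\,(1-\Pi)f}$ for an explicit radial weight $P$ of polynomial growth (the $\Pi f$ part drops by parity), so combining the resolvent bound $\big\|\big(1+c_\beta\nabla^*\nabla\big)^{-1}\nabla^*\big\|\le c_\beta^{-1/2}$ — which follows from $c_\beta\,\big\|\nabla\big(1+c_\beta\nabla^*\nabla\big)^{-1}w\big\|^2\le\big\langle\big(1+c_\beta\nabla^*\nabla\big)^{-1}w,w\big\rangle\le\|w\|^2$ — with a Cauchy--Schwarz inequality in $v$ gives $\|\mathsf A\mathsf Lf\|_{\mathscr H}\le C\,\|(1-\Pi)f\|_{\mathscr H}$. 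For $\mathsf A\mathsf T(1-\Pi)$ I would use the anti-symmetry of $\mathsf T$ and the self-adjointness of $B:=\big(1+c_\beta\nabla^*\nabla\big)^{-1}$ to write $\langle\mathsf A\mathsf T(1-\Pi)f,\Pi g\rangle=-\langle(1-\Pi)f,\mathsf T^2\Pi B\Pi g\rangle$, which (since $\mathsf A\mathsf T(1-\Pi)f\in\mathrm{Ker}\,\mathsf L$) reduces the bound to $\|\mathsf T^2\Pi B\|<\infty$ from $\Pi\mathscr H$ to $\mathscr H$; because $\mathsf T^2f_\rho=f_\star\big[(\nabla_v\mathscr E\otimes\nabla_v\mathscr E):\nabla_x^2u-(\nabla_v^2\mathscr E\,\nabla\phi)\cdot\nabla_xu\big]$, this requires second-order elliptic regularity $\big\|\nabla^2\big(1+c_\beta\nabla^*\nabla\big)^{-1}\big\|<\infty$ on $\mathrm L^2(e^{-\phi}dx)$ — which I would derive from the Bochner identity $\irdx{|\nabla^2w|^2e^{-\phi}}+\irdx{(\nabla^2\phi)(\nabla w,\nabla w)e^{-\phi}}=\irdx{(\Delta_\phi w)^2e^{-\phi}}$ together with $\big\|\Delta_\phi\big(1+c_\beta\nabla^*\nabla\big)^{-1}\big\|\le c_\beta^{-1}$ — and a bound on $\big\||\nabla\phi|\,\nabla\big(1+c_\beta\nabla^*\nabla\big)^{-1}\big\|$. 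Both demand quantitative control on $\phi$ — a two-sided bound on $\nabla^2\phi$ and a relative bound such as $|\nabla\phi|^2\le C\,(1+|\nabla^2\phi|)$ — which is genuinely stronger than \eqref{Poincare} alone, is already needed in the Maxwellian case, and is the main obstacle; I would take these as the standing regularity assumptions on $\phi$. Once \eqref{H1}--\eqref{H4} hold, Theorem~\ref{ThmAbs:DMS} applied to $F=f-f_\star$ delivers \eqref{Decay:BDMMS} with $\|\cdot\|=\nrmmu\cdot2$, which is the claim. \qed
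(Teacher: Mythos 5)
Your verification of \eqref{H1}--\eqref{H3} matches the paper exactly, and for \eqref{H4} you take the same route as Steps~3 and~4 of the paper's Lemma~\ref{LemmaDissip}: compute $\mathsf L\mathsf T(u\,f_\star)$ and $\mathsf T^2(u\,f_\star)$ explicitly, factor out the $v$-dependence into a bounded weight, and reduce the bounds on $\mathsf A\mathsf L$ and $\mathsf A\mathsf T(1-\Pi)$ to elliptic estimates for the resolvent $\big(1+c_\beta\,\nabla^*\nabla\big)^{-1}$ in $\mathrm L^2(e^{-\phi}dx)$. The only stylistic difference is in how the second-order elliptic estimate is organised: you invoke the standard weighted Bochner identity $\int|\nabla^2 w|^2 e^{-\phi}+\int(\nabla^2\phi)(\nabla w,\nabla w)\,e^{-\phi}=\int(\Delta_\phi w)^2 e^{-\phi}$, whereas the paper works with the asymmetric matrix $\mathrm{Hess}(u)-\nabla_x u\otimes\nabla_x\phi$ (whose $ij$-entry is $\rho_\star^{-1}\,\partial_{x_i}(\rho_\star\,\partial_{x_j}u)$) and the identity~\eqref{ClainedIdentity}, which is more directly tailored to the expression of $\mathsf T^2(u\,f_\star)$; the two are morally equivalent and both ultimately have to absorb a $|\nabla\phi|$-weighted gradient term. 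Your observation that absorbing this term requires structural control on $\phi$ beyond the bare Poincar\'e inequality~\eqref{Poincare} is correct and well placed: the paper's bound~\eqref{ImprPoinc} rests on boundedness of $|\nabla_x\phi|^2$ for $\alpha\in(0,1)$ and on~\cite[Lemma~8]{DMS} for $\alpha\ge1$, i.e.\ precisely on the kind of two-sided Hessian and relative-growth hypotheses you list, which the Corollary's statement leaves implicit. Subject to making those regularity assumptions on $\phi$ explicit, your proof is correct and follows essentially the same path as the paper.
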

%---------------------------------------------------------------------
The case $\beta=2$ is by now standard and covered in various papers: see~\cite{Dolbeault2009511,DMS} for an $\mathrm L^2$ hypocoercivity approach and~\cite{MR2034753,MR2130405,MR2215889} as well as references therein for earlier results based on hypoelliptic methods. To our knowledge the case $\beta\neq2$ has not been studied yet by $\mathrm L^2$-hypocoercivity methods, but convergence results are known from~\cite{cao2023explicit,brigati2023construct} using other methods. Of particular interest in physics is the case $\beta=1$ where $\mathscr E$ is the standard energy for relativistic particles, up to physical constants (mass and speed of light are taken equal to $1$), while the corresponding $\mathsf L$ operator is not much more than a caricature of a relativistic collision operator. Concerning $\mathsf L$ and from the point of view of phenomenological models, it is however interesting to consider local equilibria given by~\eqref{urho} and it makes sense to assume that stationary solutions have the \emph{factorization} property. Throughout this paper, we will make this simplifying assumption.

The strategy of the proof of Corollary~\ref{Cor:DMS} goes as follows. With $\beta\ge1$, we have the Poincar\'e inequality: there is some $\lambda_m>0$ such that, for all $g\in\mathrm L^2(\R^d,e^{-\frac1\beta\,\bangle v^\beta}\,dv)$ such that $\irdv{g\,e^{-\frac1\beta\,\bangle v^\beta}}=0$, we have
\be{Poincare:v}
\irdv{|\nabla g|^2\,e^{-\frac1\beta\,\bangle v^\beta}}\ge\lambda_m\irdv{|g|^2\,e^{-\frac1\beta\,\bangle v^\beta}}\,.
\ee
This is for instance a consequence of Persson's lemma based on the observation that $\psi(v):=\frac1\beta\,\bangle v^\beta$ is such that
\[
\liminf_{|v|\to+\infty}\(\frac14\,|\nabla\psi(v)|^2-\frac12\,\Delta\psi(v)\)>0\,.
\]
See for instance~\cite[Appendix~A.1]{Bouin_2020} for details. As a consequence,~\eqref{H1} holds. The macroscopic coercivity condition~\eqref{H2} follows from~\eqref{Poincare}. The \emph{parabolic macroscopic dynamics} condition~\eqref{H3} is a simple consequence of the definitions of $\mathsf T$ and $\Pi$. Hence the only assumption that deserves some attention is~\eqref{H4}, which is obtained by elliptic estimates. A detailed proof is given in Section~\ref{Sec:ThmMain}.

\medskip In the framework of~\eqref{Tkin} and~\eqref{Lkin}, an elementary computation shows that~\eqref{Diffusion} written for $f_\rho$ defined by~\eqref{urho} reduces to the \emph{Fokker-Planck} equation
\be{FP}
\frac{\partial\rho}{\partial t}=\sigma\,\big(\Delta\rho+\nabla\cdot(\rho\,\nabla\phi)\big)
\ee
with diffusion coefficient $\sigma$ given in terms of $\beta$ by
\be{sigma}
\sigma=\frac1d\irdv{|v|^2\,\bangle v^{2\beta-4}\,e^{-\frac1\beta\,\bangle v^\beta}}\,.
\ee
In order to simplify the discussion, we shall assume that
\[
\phi(x)=\frac1\alpha\,\bangle x^\alpha\quad\forall\,x\in\R^d\,.
\]
Corollary~\ref{Cor:DMS} corresponds to $\alpha\ge1$. Our purpose is to investigate the decay rates of~\eqref{kFP} in terms of $\beta>0$ and $\alpha>0$. Let us start by studying the asymptotic behaviour of a solution $\rho$ of~\eqref{FP} depending on the various cases for the potential $\phi$. For completeness, we will also consider the limit case as $\alpha\to0$ and distinguish several cases depending on whether we take $\phi=0$ a.e., or (in the case of the Fokker-Planck equation), \hbox{depending on
~$\gamma>0$},
\[
\phi(x)=\gamma\,\log\bangle x\quad\forall\,x\in\R^d\,.
\]
Up to minor technicalities, general potentials $\phi$ with asymptotic power law or logarithmic growths as $|x|\to+\infty$ could also be covered.

%%%%%%%%%%%%%%%%%%%%%%%%%%%%%%%%%%%%%%%%%%%%%%%%%%%%%%%%%%%%%%%%%%%%%%
%%%%%%%%%%%%%%%%%%%%%%%%%%%%%%%%%%%%%%%%%%%%%%%%%%%%%%%%%%%%%%%%%%%%%%
\section{Fokker-Planck equations with various external potentials, moments and functional inequalities}\label{Sec:Diffusion}

We collect some results on the asymptotic behaviour of the solutions of~\eqref{FP} as $t\to+\infty$ based on various functional inequalities. 
In this section we omit the discussion of optimality cases and estimates on sharp constants in the functional inequalities. By default, constants in the inequalities are always taken to their optimal value. Table~\ref{Table:FP} collects the results in a synthetic picture, although without all details on the assumptions.

%%%%%%%%%%%%%%%%%%%%%%%%%%%%%%%%%%%%%%%%%%%%%%%%%%%%%%%%%%%%%%%%%%%%%%
\subsection{Strong confinement case: Poincar\'e inequality}
If $\phi(x)=\frac1\alpha\,\bangle x^\alpha$ with $\alpha\ge1$, then~\eqref{Poincare} holds with $\lambda_\phi=\lambda_M>0$. We apply it to $u=\rho/e^{-\phi}$. A solution $\rho$ of~\eqref{FP} with initial datum $\rho_0$ at $t=0$ satisfies
\[
\frac d{dt}\nrmphi{\rho(t,\cdot)}2^2=-\,2\,\sigma\,\nrmphi{\nabla\rho(t,\cdot)}2^2\le-\,2\,\lambda_M\,\sigma\,\nrmphi{\rho(t,\cdot)}2^2\,,
\]
which yields the estimate
\[
\nrmphi{\rho(t,\cdot)}2^2\le\nrmphi{\rho_0}2^2\,e^{-2\,\lambda_M\,\sigma\,t}\quad\forall\,t\ge0\,.
\]

%%%%%%%%%%%%%%%%%%%%%%%%%%%%%%%%%%%%%%%%%%%%%%%%%%%%%%%%%%%%%%%%%%%%%%
\subsection{Weak confinement case: weighted Poincar\'e inequality}

The following results are taken from~\cite[Appendices~A and~B]{BDLS}. We assume here that $\alpha\in(0,1)$ and consider a solution of~\eqref{FP} with nonnegative initial datum $\rho_0\in\mathrm L^1(\R^d,dx)$ such that $\nrmx{\rho_0}1=1$. The function $u=\rho\,e^\phi$ is a solution of the \emph{Ornstein-Uhlenbeck} equation (also known as the \emph{backward Kolmogorov} equation)
\be{Eq:OU}
\frac{\partial u}{\partial t}=\sigma\,\big(\Delta u-\nabla \phi\cdot\nabla u\big)\,.
\ee
With $k\ge0$, let us compute
\[
\frac d{dt}\irdx{|u(t,x)|^2\,\bangle x^k\,e^{-\phi}}+2\,\sigma\irdx{|\nabla_xu(t,x)|^2\,\bangle x^k\,e^{-\phi}}\le\irdx{\big(a_k-b_k\,\bangle x^{\alpha-2}\big)\,|u(t,x)|^2\,\bangle x^k\,e^{-\phi}}
\]
for some $a_k\in\R$, $b_k\in(0,+\infty)$. As a consequence, there exists a constant $\mathcal K(k)>0$ such that
\[
\irdx{\bangle x^k\,|\rho(t,x)|^2\,e^\phi}\le\mathcal K(k)\irdx{\bangle x^k\,|\rho_0|^2\,e^\phi}\quad\forall\,t\ge0\,.
\]
See~\cite[Proposition~4 and Appendix~B.2]{BDLS} for details. With $k=0$, we notice that $a_0=b_0=0$ and use the \emph{weighted Poincar\'e inequality}
\be{wPoincaré:x}
\irdx{|\nabla_xu(t,x)|^2\,e^{-\phi}}\ge\mathscr C_\alpha^{\mathrm{wP}}\irdx{|u(t,x)-\bar u|^2\,\frac{e^{-\phi}}{\bangle x^{2\,(1-\alpha)}}}\quad\mbox{where}\quad\bar u=\frac{\irdx{u\,e^{-\phi}}}{\irdx{e^{-\phi}}}
\ee
(notice that the average $\bar u$ is computed with respect to the measure of the l.h.s.) to prove that
\[
\frac d{dt}\irdx{|u(t,x)-\bar u|^2\,e^{-\phi}}=-\,2\,\sigma\irdx{|\nabla_xu(t,x)|^2\,e^{-\phi}}\le-\,2\,\sigma\,\mathscr C_\alpha^{\mathrm{wP}}\irdx{|u(t,x)-\bar u|^2\,\frac{e^{-\phi}}{\bangle x^{2\,(1-\alpha)}}}\,.
\]
With $k\ge2\,(1-\alpha)$ and $\theta=k/\big(k+2\,(1-\alpha)\big)$, H\"older's inequality
\[
\irdx{|u-\bar u|^2\,e^{-\phi}}\le\(\irdx{|u-\bar u|^2\,\frac{e^{-\phi}}{\bangle x^{2\,(1-\alpha)}}}\)^\theta\(\irdx{|u-\bar u|^2\,\bangle x^k\,e^{-\phi}}\)^{1-\theta}
\]
allows us to prove that
\[
\irdx{|\rho(t,x)-\rho_\star(x)|^2\,e^\phi}\le\(\nrmphi{\rho_0-\rho_\star}2^{-\,4\,(1-\alpha)/k}+\frac{4\,(1-\alpha)\,\sigma\,\mathscr C_\alpha^{\mathrm{wP}}}{k\,\mathcal K_*^{\,2\,(1-\alpha)/k}}\,t\)^{-\frac k{2\,(1-\alpha)}}\quad\forall\,t\ge0
\]
with $\mathcal K_*:=\mathcal K(k)^2\,\irdx{\bangle x^k\,|\rho_0|^2\,e^\phi}+\irdx{\bangle x^k\,e^{-\phi}}\nrmx{\rho_0}1^2$.

%%%%%%%%%%%%%%%%%%%%%%%%%%%%%%%%%%%%%%%%%%%%%%%%%%%%%%%%%%%%%%%%%%%%%%
\subsection{Weak confinement, a limit case: Hardy-Poincar\'e inequality}

The results in this case are new. In the limit as $\alpha\to0_+$, we can assume that $\phi(x)=\gamma\,\log\bangle x$ with $\gamma>d$ so that $f_\star$ defined by~\eqref{fstar} is integrable. Let $u=\rho\,e^\phi$ be a solution of~\eqref{Eq:OU}. With $k\ge0$, let us compute
\begin{multline}\label{eq:dissipL2}
\frac d{dt}\irdx{|u(t,x)|^2\,\bangle x^k\,e^{-\phi}}+2\,\sigma\irdx{|\nabla_xu(t,x)|^2\,\bangle x^k\,e^{-\phi}}\\
=k\irdx{|u(t,x)|^2\,\bangle x^{k-2}\(d+(k-\gamma-2)\,\tfrac{|x|^2}{\bangle x^2}\)e^{-\phi}}\\
\le k\irdx{|u(t,x)|^2\,\bangle x^{k-2}\(d+k-\gamma-2-(k-\gamma-2)\,\bangle x^{-2}\)e^{-\phi}}\,.
\end{multline}
By arguing as in~\cite[Proposition~4 and Appendix~B.2]{BDLS}, this is enough to prove that there exists a constant $\mathcal K(k)>0$ such that
\[
\irdx{\bangle x^k\,|u(t,x)|^2\,e^{-\phi}}\le\mathcal K(k)\irdx{\bangle x^k\,|\rho_0|^2\,e^\phi}\quad\forall\,t\ge0
\]
if $k\in(\gamma-d,\gamma+2-d)$. Notice that a better range of $k$ can be obtained as follows. Since $\bangle x^k\,e^{-\phi}=\bangle x^{k-\gamma}$, we learn from~\cite{MR2609957,MR2823993} that for some positive constant $\mathscr C^{\mathrm{HP}}_{\gamma-k}$, we have the \emph{Hardy-Poincar\'e} inequality
\be{HP}
\irdx{|\nabla_xu|^2\,\bangle x^k\,e^{-\phi}}\ge\mathscr C^{\mathrm{HP}}_{\gamma-k}\irdx{|u-\bar u|^2\,\bangle x^{k-2}\,e^{-\phi}}
\ee
for an appropriate choice of $\bar u$ depending on $k-\gamma$. In any case, Inequality~\eqref{eq:dissipL2} written with $k=0$, that is,
\[
\frac d{dt}\irdx{|u(t,x)-\bar u|^2\,e^{-\phi}}\le-\,2\,\sigma\irdx{|\nabla_xu(t,x)|^2\,e^{-\phi}}
\]
and then~\eqref{HP} combined with H\"older's inequality applied as in the case of the weighted Poincar\'e inequality (with $\alpha=0$) show that
\[
\irdx{|\rho(t,x)-\rho_\star(x)|^2\,e^\phi}\le\irdx{|\rho_0-\rho_\star|^2\,e^\phi}\,(1+c\,t)^{-\frac k2}\quad\forall\,t\ge0
\]
for some constant $c$ which depends on $d$, $\gamma$, $\sigma$, $k$, $\irdx{|\rho_0|^2\,\bangle x^{k-\gamma}}$ and $\nrmx{\rho_0}1^2$.

%%%%%%%%%%%%%%%%%%%%%%%%%%%%%%%%%%%%%%%%%%%%%%%%%%%%%%%%%%%%%%%%%%%%%%
\subsection{Very weak confinement case: Caffarelli-Kohn-Nirenberg inequality}

According to~\cite[Theorem~1]{BDS-very-weak}, if $1\le\gamma<d$ and $\phi(x)=\gamma\,\log\bangle x$, a solution $\rho$ of~\eqref{FP} with nonnegative initial datum $\rho_0\in\mathrm L^1(\R^d,\bangle x^k\,dx)\cap\mathrm L^2\big(\R^d,e^\phi\,dx\big)$ with $k=\max\{2,\gamma/2\}$ satisfies the estimate
\[
M_k(t):=\ird{\bangle x^k\,\rho(t,x)}\le2^\frac{k-2}2\(M_0+\(\big(M_k(0)-M_0\big)^{2/k}+2\,\sigma\,\big(d+k-2-\gamma\big)\,M_0^{2/k}t\)^{k/2}\)\,.
\]
With $e^{-\phi}=\bangle x^{-\gamma}$ and $u=\rho\,\bangle x^\gamma$, a solution $u$ of~\eqref{Eq:OU} satisfies the estimate
\[
\frac d{dt}\irdx{|u(t,x)|^2\,\bangle x^{-\gamma}}=-\,2\,\sigma\irdx{|\nabla u(t,x)|^2\,\bangle x^{-\gamma}}
\]
Combined with the \emph{inhomogeneous Caffarelli-Kohn-Nirenberg inequality}
\[
\ird{|u|^2\,\bangle x^{-\gamma}}\le\mathscr C_{k,\gamma}^{\mathrm{CKN}}\(\ird{|\nabla u|^2\,\bangle x^{-\gamma}}\)^a\(\ird{u\,\bangle x^{k-\gamma}}\)^{2(1-a)}\quad\mbox{with}\quad a=\frac{d+2k-\gamma}{d+2+2k-\gamma}\,,
\]
this proves the decay estimate
\[
\nrmphi{\rho(t,\cdot)}2^2\le\nrmphi{\rho_0}2^2\,(1+c\,t)^{-\frac{d-\gamma}2}\quad\forall\,t\ge0
\]
where the constant $c$ depends on $d$, $\gamma$, $\sigma$, $\nrmphi{\rho_0}2$, $M_0=\nrm{u_0}1$, and $M_k(0)=\nrm{|x|^k\,\rho_0}1$. For more details, as well as a proof of the Caffarelli-Kohn-Nirenberg inequality, see~\cite[Appendix~B]{BDS-very-weak}.
%%%%%%%%%%%%%%%%%%%%%%%%%%%%%%%%%%%%%%%%%%%%%%%%%%%%%%%%%%%%%%%%%%%%%%
\subsection{No potential case: Nash's inequality}

We assume that $\phi=0$ so that~\eqref{FP} is the standard heat equation. By \emph{Nash's inequality}
\[\label{Nash}
\nrmx u2\le\mathscr C_{\mathrm{Nash}}\,\nrmx{\nabla u}2^\frac d{d+2}\,\nrmx u1^\frac2{d+2}\quad\forall\,u\in\mathrm H^1(\R^d,dx)\,,
\]
a solution $\rho$ of~\eqref{FP} with initial datum $\rho_0$ at $t=0$ satisfies
\[
\frac d{dt}\nrmx{\rho(t,\cdot)}2^2=-\,2\,\sigma\,\nrmx{\nabla\rho(t,\cdot)}2^2\,.
\]
Hence $y(t):=\nrmx{\rho(t,\cdot)}2^2$ solves the differential inequality $y'\le-\,2\,\sigma\,\mathscr C_{\mathrm{Nash}}^{-1}\,\nrmx{\rho_0}1^{-\frac4d}\,y^{1+\frac2d}$ which, after integration, yields the estimate
\[
\nrmx{\rho(t,\cdot)}2^2\le\Big(\nrmx{\rho_0}2^{-4/d}+\,\tfrac{4\,\sigma}{d\,\mathscr C_{\mathrm{Nash}}}\,\nrmx{\rho_0}1^{-4/d}\,t\Big)^{-d/2}\quad\forall\,t\ge0\,.
\]

%%%%%%%%%%%%%%%%%%%%%%%%%%%%%%%%%%%%%%%%%%%%%%%%%%%%%%%%%%%%%%%%%%%%%%
\vspace*{-0.5cm}
%---------------------------------------------------------------------
\begin{table}[hb]
\begin{center}
\begin{tabular}{|l|c|c||c|c|c|}
\hline
\cpbx{1.8}{Potential}&\cpbx{2.5}{$\phi=0$}&\cpbx{2.5}{$\phi(x)=\gamma\,\log\bangle x$\\$\gamma<d$}&\cpbx{2.5}{$\phi(x)=\gamma\,\log\bangle x$\\$\gamma>d$}
&\cpbx{2.5}{$\phi(x)=\frac1\alpha\,\bangle x^\alpha$\\$\alpha\in(0,1)$}&\cpbx{2.5}{$\phi(x)=\frac1\alpha\,\bangle x^\alpha$\\$\alpha\ge1$}\\
\hline
\cpbx{1.8}{Inequality}&\cellcolor[gray]{0.9}Nash&\cpbx{2.5}{Caffarelli-Kohn-\\Nirenberg}&\cpbx{2.5}{Hardy-Poincar\'e}
&\cellcolor[gray]{0.9}\cpbx{2.5}{Weighted Poincar\'e}&\cellcolor[gray]{0.9}Poincar\'e\\
\hline
\cpbx{1.8}{Asymptotic\\behavior}&\cellcolor[gray]{0.9}\cpbx{2.5}{$t^{-d/2}$\\decay}&\cpbx{2.5}{$t^{-(d-\gamma)/2}$\\decay}&\cpbx{2.5}{$t^{-k/2}$\\convergence}
&\cellcolor[gray]{0.9}\cpbx{2.5}{$t^{-\frac k{2\,(1-\alpha)}}$\\convergence}&\cellcolor[gray]{0.9}\cpbx{2.5}{$e^{-\lambda\,t}$\\convergence}\\
\hline
\cpbx{1.8}{References}&\cpbx{2.5}{\cite{Nash58}}&\cpbx{2.5}{\cite{BDS-very-weak}}&\cpbx{2.5}{}
&\cpbx{2.5}{\cite{BDLS}}&\cpbx{2.5}{$(*)$}\\
\hline
\end{tabular}
\caption{\label{Table:FP} Short summary of the behaviours as $t\to+\infty$ of the solution of~\eqref{FP} depending on the choice of $\phi$, with some references. On the left side ($\phi=0$ or $\gamma<d$), there is no global stationary solution and we study decay rates. On the right side, we investigate the convergence rates to a global stationary solution. Under additional or different constraints on the initial data, other behaviours can be obtained based for instance on \emph{weak Poincar\'e inequalities}: see~\cite[Theorem~2.1]{MR1856277},~\cite[Theorem~1.4]{MR2381160} and~\cite{MR4265692}.\newline $(*)$ The use of the Poincar\'e inequality in relation with the Fokker-Planck equation has a long history, which we cannot cover entirely here: we can for instance refer to~\cite{Nash58}, and to~\cite[Chapter~4]{MR3155209} for an overview in the context of Markov processes.}
\end{center}
\end{table}
%---------------------------------------------------------------------
\clearpage
\subsection{A short summary}

In case of the Fokker-Planck equation~\eqref{FP}, Table~\ref{Table:FP} summarizes what is known on decay rates based on moment estimates and interpolation inequalities. Cases in gray will be further considered in the case of kinetic~equations.

%%%%%%%%%%%%%%%%%%%%%%%%%%%%%%%%%%%%%%%%%%%%%%%%%%%%%%%%%%%%%%%%%%%%%%
%%%%%%%%%%%%%%%%%%%%%%%%%%%%%%%%%%%%%%%%%%%%%%%%%%%%%%%%%%%%%%%%%%%%%%
\section{Kinetic Fokker-Planck equations and hypocoercivity results}\label{Sec:Hypocoercivity}

%%%%%%%%%%%%%%%%%%%%%%%%%%%%%%%%%%%%%%%%%%%%%%%%%%%%%%%%%%%%%%%%%%%%%%
\subsection{State of the art}

Some known results are collected in Table~\ref{Table:kFP}. They are exclusively concerned with the classical transport operator
\[
\mathsf Tf:=v\cdot\nabla_xf-\nabla_x\phi\cdot\nabla_vf\,,
\]
\emph{i.e.}, coincide with our framework if $\beta=2$ (at the level of the transport operator).
%---------------------------------------------------------------------
\begin{table}
\begin{center}
\begin{tabular}{|l|c||c|c|}
\hline
\cpbx{2.5}{Potential}&\cpbx{2.85}{$\phi=0$}
&\cpbx{2.5}{$\phi(x)=\frac1\alpha\,\bangle x^\alpha$\\$\alpha\in(0,1)$}&\cpbx{2.5}{~\\$\phi(x)=\frac1\alpha\,\bangle x^\alpha$\\$\alpha\ge1$, or $\mathbb T^d$\\ Macro Poincar\'e\\}\\
\hline
\cpbx{2.5}{$\psi(v)=\frac1\beta\,\bangle v^\beta$\\ $\beta\ge1$\\Micro Poincar\'e}&\cellcolor[gray]{0.9}\cpbx{2.85}{$t^{-d/2}$\\decay\\\cite{BDMMS}}
&\cellcolor[gray]{0.9}\cpbx{2.5}{$e^{-t^b}$,~$b<1$\\ $\beta=2$\\convergence\\~\cite{MR4069622}}&\cellcolor[gray]{0.9}\cpbx{2.5}{$e^{-\lambda\,t}$\\convergence\\~\cite{MR2215889,Mouhot-Neumann,Dolbeault2009511,DMS,MR3488535,albritton2021variational,cao2023explicit,Brigati2023KRM,brigati2023construct}}\\\hline
\cpbx{2.5}{$\psi(v)=\frac1\beta\,\bangle v^\beta$\\ $\beta\in(0,1)$}&\cellcolor[gray]{0.9}\cpbx{2.85}{$t^{-\zeta}$\\ $\zeta=\min\{\frac d2,\frac\ell{2(1-\beta)}\}$\\decay,~\cite{BDLS}}
&\cellcolor[gray]{0.9}\cpbx{2.5}{$t^{-\zeta}$\\ convergence\\\cite{BDZ-2}}&\cellcolor[gray]{0.9}\cpbx{2.5}{$t^{-\zeta}$\\ convergence\\\cite{BDZ-2}}\\\hline
\cpbx{2.5}{Limit as $\beta\to 0_+$\\ $\psi(v)=-(d+\varepsilon)\,\log\bangle v$}&\cpbx{2.5}{$\varepsilon\in(0,2)$\\ fractional dif-\\fusion limit,~\cite{Bouin_2022}}
&\cite{BDZ-2}&\cpbx{2.5}{$t^{-\zeta}$ if $\varepsilon>2$\\ convergence\\\cite{BDZ-2}}\\\hline
\end{tabular}
\caption{\label{Table:kFP} Rough classification of the asymptotic behaviour of the solutions of $\partial_tf+v\cdot\nabla_xf-\nabla_x\phi\cdot\nabla_vf=f_\star\,\nabla_v\big(f_\star^{-1}\,\nabla_vf\big)$ as $t\to+\infty$ where $f_\star(x,v)=Z^{-1}\,\exp\(-\phi(x)-\psi(v)\)$. Additional assumptions on the initial datum $f_0=f(t=0,\cdot,\cdot)$ are needed: for instance in the case $\alpha\ge1$ and $\beta\in(0,1)$, the initial datum is such that $f_0\in\mathrm L^2\big(\R^d\times\R^d,\bangle v^{(1-\beta)\,\sigma}\,d\mu\big)$. In the case $\phi=0$ and $\beta\in(0,1)$, we assume that $f_0\in\mathrm L^2\big(\R^d\times\R^d,\bangle v^{\ell/2}\,d\mu\big)$. If $\alpha\in(0,1)$ and $\beta\in(0,1)$, using the weak Poincar\'e inequality requires specific bounds. Further cases and more detailed assumptions can be found in the references collected above.}
\end{center}
\end{table}
%---------------------------------------------------------------------

In Table~\ref{Table:kFP}, if $\beta\ge1$, \emph{Micro} Poincar\'e refers to a Poincar\'e inequality written in the velocity variable $v$, which controls the convergence towards a local equilibrium while, \emph{Macro} Poincar\'e refers to a Poincar\'e inequality written in the position variable $x$, which controls the convergence of the solution in the macroscopic or diffusion limit, towards a global equilibrium, or to $0$ if there is no such equilibrium. Cases in gray will be further considered in the case of the transport operator given by~\eqref{Tkin}.

%%%%%%%%%%%%%%%%%%%%%%%%%%%%%%%%%%%%%%%%%%%%%%%%%%%%%%%%%%%%%%%%%%%%%%
\subsection{Notation and basic observations}\label{Sec:Notation}

{}From here on, we assume that $\psi(v)=\frac1\beta\,\bangle v^\beta$ and $\phi(x)=\frac1\alpha\,\bangle x^\alpha$ for some $\beta>0$ and $\alpha>0$, use the notation
\[
\rho_\star:=\frac{e^{-\phi}}{\irdx{e^{-\phi}}}\,,\quad\rho_f:=\irdv f\quad\mbox{and}\quad u_f:=\frac{\rho_f}{\rho_\star}\,,
\]
and consider the transport operator given by~\eqref{Tkin}. We recall that $f_\star$ is defined by~\eqref{fstar}. The following observations can be omitted at first reading and will be used only in Sections~\ref{Sec:EntropyProduction}--\ref{Sec:ThmMain} for proving Theorem~\ref{Thm:Main}. We can write
\[
\Pi f=\rho_f\,\frac{e^{-\psi}}{\irdv{e^{-\psi}}}=u_f\,f_\star\,,\quad\mathsf T\Pi f=\bangle v^{\beta-2}\,(v\cdot\nabla_xu_f)\,f_\star\quad\mbox{and}\quad\Pi\mathsf Tf=\(\nabla_x\cdot\irdv{v\,\bangle v^{\beta-2}\,f}\)\frac{f_\star}{\rho_\star}\,.
\]
If $f=u\,f_\star\in\mathrm{Ker}(\mathsf L)$ and $\sigma$ is defined by~\eqref{sigma}, then
\[
(\mathsf T\Pi)^*\,(\mathsf T\Pi)\,f=-\,\frac{\sigma}{\rho_\star}\,\nabla_x\cdot\big(\rho_\star\,\nabla_x u\big)\,f_\star=-\,\sigma\,\big(\Delta_xu-\nabla_x\phi\cdot\nabla_xu\big)\,f_\star\,,
\]
Solving $g=\big(1+(\mathsf T\Pi)^*(\mathsf T\Pi)\big)^{-1}f$ means that $g=u\,f_\star$ where $u=u_g$ solves
\be{ell-u}
u-\sigma\,\big(\Delta_xu-\nabla_x\phi\cdot\nabla_xu\big)=u_f\,.
\ee
In order to justify integrations by parts (see Section~\ref{Sec:EntropyProduction} below), one can notice that $c\,f_\star$ with an arbitrary $c\in\R$ can be used as a barrier function, so that we can assume that $u_f$ is bounded as $|x|\to+\infty$. Standard elliptic estimates apply to the solution $u$ of~\eqref{ell-u} and one can conclude using density arguments.

%%%%%%%%%%%%%%%%%%%%%%%%%%%%%%%%%%%%%%%%%%%%%%%%%%%%%%%%%%%%%%%%%%%%%%
\subsection{Main result}

Our goal is to get a classification similar to the results summarized in Table~\ref{Table:kFP} for $\beta\neq2$ in the transport operator defined by~\eqref{Tkin}, \emph{i.e.}, with $\mathsf Tf:=\nabla_v\mathscr E\cdot\nabla_xf-\nabla_x\mathscr E\cdot\nabla_vf$. As far as we know, this transport operator has not been studied yet in the framework of hypocoercivity methods, except for some recent results in the bounded domain case in~\cite{albritton2021variational,Brigati2023KRM} or when $\alpha\ge1$ in~\cite{cao2023explicit,brigati2023construct} which are based on weak norms and Lions' lemma.
%---------------------------------------------------------------------
\begin{theorem}\label{Thm:Main}
Let $f=f(t,x,v)$ be a solution of~\eqref{kFP} with transport and collision operators given respectively by~\eqref{Tkin} and~\eqref{Lkin} for some $\beta>0$ and $\alpha>0$. With $f_\star$ defined by~\eqref{fstar}, we assume that the initial datum satisfies
\be{Linfty}
0\le f_0\le C\,f_\star
\ee
for a suitable constant $C>0$. Depending on $\beta$ and $\alpha$, we have the following convergence and decay estimates.
\begin{enumerate}
\item Assume $\beta\ge 1$ and $\alpha\ge 1$. Then there exist constants $\mathscr C>0$ and $\lambda>0$ such that any solution $f$ of~\eqref{kFP} with initial datum $f_0\in \mathrm L^2(\R^d\times\R^d,d\mu)$ satisfies 
\[
\nrmmu{f-f_\star}2^2\le \mathscr C e^{-\lambda t}\,\nrmmu{f_0-f_\star}2^2\quad\forall\,t\ge0\,.
\]
\item Assume $\beta\in(0,1)$ and $\alpha\ge 1$. Then there exists a constant $\mathscr C_\ell>0$ such that any solution $f$ of~\eqref{kFP}, with initial datum $f_0\in \mathrm L^2\big(\R^d\times\R^d,\bangle v^\ell\,d\mu\big)$ for some $\ell>0$, satisfies
\[
\nrmmu{f-f_\star}2^2\le \mathscr C_\ell\,(1+t)^{-\frac\ell{2(1-\beta)}}\,\nrmmu{f_0-f_\star}2^2\quad\forall\,t\ge0\,.
\]
\item Assume $\beta\ge 1$ and $\alpha\in(0,1)$. Then there exists a constant $\mathscr C_k>0$ such that any solution $f$ of~\eqref{kFP}, with initial datum $f_0\in \mathrm L^2\big(\R^d\times\R^d,\bangle x^k\,d\mu\big)$ for some $k>0$, satisfies
\[
\nrmmu{f-f_\star}2^2\le \mathscr C_k\,(1+t)^{-\frac k{2(1-\alpha)}}\,\nrmmu{f_0-f_\star}2^2\quad\forall\,t\ge0\,.
\]
\item Assume $\beta\in(0,1)$ and $\alpha\in(0,1)$. Then there exist a constant $\mathscr C_{k,\ell}>0$ such that any solution $f$ of~\eqref{kFP}, with initial datum $f_0\in \mathrm L^2\big(\R^d\times\R^d,\bangle x^k\,d\mu\big)\cap \mathrm L^2\big(\R^d\times\R^d,\bangle v^\ell\,d\mu\big)$ for some $k>0$ and $\ell>0$, satisfies
\[
\nrmmu{f-f_\star}2^2\le \mathscr C_{k,\ell}\,(1+t)^{-\zeta}\,\nrmmu{f_0-f_\star}2^2\quad\forall\,t\ge0\,.
\]
where $\zeta=\min\big\{\frac k{2(1-\alpha)},\frac\ell{2(1-\beta)}\big\}$
\item Assume $\beta \ge 1$ and $\phi=0$. Then there exist a constant $\mathscr K>0$ depending on $\nrmxv{f_0}1$ such that any solution $f$ of~\eqref{kFP}, with initial datum $f_0\in \mathrm L^2(\R^d\times\R^d,d\mu)$, satisfies
\[
\nrmmu f2^2\le \mathscr K\,(1+t)^{-\frac d2}\,\nrmmu{f_0}2^2\quad\forall\,t\ge0\,.
\]
\item Assume $\beta\in(0,1)$ and $\phi=0$. Then there exist a constant $\mathscr K_\ell>0$ such that any solution $f$ of~\eqref{kFP}, with initial datum $f_0\in \mathrm L^2\big(\R^d\times\R^d,\bangle v^\ell\,d\mu\big)$ for some $\ell>0$, satisfies
\[
\nrmmu f2^2\le \mathscr K_\ell\,(1+t)^{-\zeta}\,\nrmmu{f_0}2^2\quad\forall\,t\ge0\,,
\]
where $\zeta=\min\big\{\frac d2,\frac\ell{2(1-\beta)}\big\}$.
\end{enumerate}
\end{theorem}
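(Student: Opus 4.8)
\medskip\noindent\emph{Strategy of proof.} Part~(1) is Corollary~\ref{Cor:DMS}: when $\beta\ge1$ and $\alpha\ge1$, the Poincar\'e inequalities \eqref{Poincare} and \eqref{Poincare:v} supply \eqref{H1}--\eqref{H2}, \eqref{H3} is immediate from the definitions, and \eqref{H4} follows from the elliptic estimates on \eqref{ell-u}, so Theorem~\ref{ThmAbs:DMS} applies to $F=f-f_\star$ in $\mathscr H=\mathrm L^2(\R^d\times\R^d,d\mu)$. In the five remaining cases at least one of \eqref{H1}, \eqref{H2} degenerates --- \eqref{H1} when $\beta\in(0,1)$, since $e^{-\frac1\beta\bangle v^\beta}$ then has no spectral gap, and \eqref{H2} when $\alpha\in(0,1)$ or $\phi=0$, the diffusion operator of \eqref{FP} then satisfying only a weighted Poincar\'e, a Hardy--Poincar\'e \eqref{HP} or a Nash inequality (cf.\ Section~\ref{Sec:Diffusion}) --- and I would treat them all by one scheme, the moments-and-interpolation method of \cite{BDMMS,BDLS,BDS-very-weak} transposed to the transport operator \eqref{Tkin}: \textbf{(i)} a priori bounds, \textbf{(ii)} a \emph{weak} entropy -- entropy production estimate, \textbf{(iii)} interpolation against a slowly growing quantity.

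\smallskip\noindent\emph{Step (i).} Since $\mathsf Tf_\star=\mathsf Lf_\star=0$, a constant multiple of $f_\star$ is a barrier for \eqref{kFP}, so \eqref{Linfty} propagates: $0\le f(t,\cdot,\cdot)\le C\,f_\star$ for all $t\ge0$; hence $\nrmxv{f(t)}1=\nrmxv{f_0}1$ and, for any weight $w\ge0$, $\irdxv{w\,|f-f_\star|^2/f_\star}\le C\irdxv{w\,(f+f_\star)}$, so weighted $\mathrm L^2(d\mu)$ norms reduce to \emph{linear} $\mathrm L^1$ moments. When $\alpha>0$ the barrier already bounds $\irdxv{\bangle x^k\bangle v^\ell f(t)}\le C\irdxv{\bangle x^k\bangle v^\ell f_\star}<\infty$ uniformly in $t$, so no moment propagation is needed and the moment hypotheses in (2)--(4) only fix the rate; when $\phi=0$ the macroscopic quantity used below is the conserved $\nrmxv f1$, and only in case~(6) is a genuine velocity moment needed --- but there the velocity marginal $g:=\irdx f$ solves the decoupled equation $\partial_tg=\mathsf Lg$ (since $\nabla_x\phi=0$), whose moments $\irdv{\bangle v^\ell g}$ stay bounded by the classical Fokker--Planck moment argument of \cite[Proposition~4]{BDLS}. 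Write $\mathcal K$ for the resulting bound.

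\smallskip\noindent\emph{Step (ii).} Keep the twisted entropy $\mathsf H[F]$ and production $\mathsf D[F]$ of \eqref{H}--\eqref{Dprod}, with $F=f-f_\star$ (or $F=f$ when $\phi=0$) and $\|\cdot\|:=\nrmmu{\cdot}{2}$; \eqref{Est3}, \eqref{Est4} and \eqref{H4} --- the last from the elliptic estimates on \eqref{ell-u} of Section~\ref{Sec:Notation}, possibly in a $\bangle v$-weighted form when $\beta<1$ --- still hold. But for $\beta\in(0,1)$, \eqref{H1} weakens to $-\bangle{\mathsf LF,F}\ge\lambda_m\,\nrmmu{\bangle v^{-(1-\beta)}(1-\Pi)F}2^2$ (the weighted Poincar\'e inequality in velocity valid when $\beta\in(0,1)$), and for $\alpha\in(0,1)$ or $\phi=0$ the macroscopic bound \eqref{gap} is replaced by $\bangle{\mathsf{AT}\Pi F,F}\ge c\,\mathsf q[\Pi F]$, where $\mathsf q[\Pi F]$ is the weighted Dirichlet form of the weighted Poincar\'e, Hardy--Poincar\'e \eqref{HP} or Nash inequality, since $\Pi F=u_f\,f_\star$ with $u_f$ a function of $x$ alone on which $(\mathsf T\Pi)^*(\mathsf T\Pi)$ acts as the operator in \eqref{FP}. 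Collecting terms as in \eqref{Dprod} and absorbing the cross terms via \eqref{Est3}, \eqref{H4} and a discriminant argument as in \cite{Arnold2021} with $\delta$ small, one gets
\[
\mathsf D[F]\ \ge\ c_1\,\nrmmu{\bangle v^{-(1-\beta)_+}(1-\Pi)F}2^2\,+\,c_2\,\mathsf q[\Pi F]\,,\qquad s_+:=\max\{s,0\}\,,
\]
so that for $\beta\ge1$ the first term is $c_1\,\|(1-\Pi)F\|^2$ and for $\alpha\ge1$ the second is $c_2\,\|\Pi F\|^2$.

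\smallskip\noindent\emph{Step (iii) and main obstacle.} Interpolating each term above against $\mathcal K$ by H\"older's inequality --- with weight $\bangle v^\ell$ in $v$, and in $x$ exactly as in the weighted Poincar\'e, Hardy--Poincar\'e and Nash subsections of Section~\ref{Sec:Diffusion} --- gives $\|(1-\Pi)F\|^2\le C\,\mathsf D[F]^{\theta_v}\mathcal K^{1-\theta_v}$ and $\|\Pi F\|^2\le C\,\mathsf D[F]^{\theta_x}\mathcal K^{1-\theta_x}$, with $\theta_v=\frac\ell{\ell+2(1-\beta)}$, $\theta_x=\frac k{k+2(1-\alpha)}$ (or $\theta_x=\frac d{d+2}$ for $\phi=0$), and $\theta_v=1$ (resp.\ $\theta_x=1$, no interpolation) when $\beta\ge1$ (resp.\ $\alpha\ge1$). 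Adding the two, using $\mathsf H[F]\simeq\|F\|^2$ by \eqref{Est4} and $-\tfrac d{dt}\mathsf H[F]=\mathsf D[F]$, the function $y:=\mathsf H[F(\cdot)]$ satisfies $y'\le-\,c\,\mathcal K^{-(1-\theta)/\theta}y^{1/\theta}$ for $y$ small, with $\theta=\min\{\theta_v,\theta_x\}$, whence $y(t)\le C\,\mathcal K\,(1+t)^{-\theta/(1-\theta)}$; since $r\mapsto r/(1-r)$ is increasing, $\theta/(1-\theta)=\min\{\tfrac\ell{2(1-\beta)},\tfrac k{2(1-\alpha)}\}$ (with $\tfrac d2$ replacing $\tfrac k{2(1-\alpha)}$ when $\phi=0$) in the doubly degenerate cases (4),(6) and equals the single nondegenerate exponent in (2),(3),(5) --- in each case the announced rate. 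I expect the difficulty to concentrate in: verifying \eqref{H4} (in the $\bangle v$-weighted form it requires when $\beta<1$) for the transport operator \eqref{Tkin} with its $\bangle v^{\beta-2}$ factor; replacing \eqref{gap} by $\mathsf q[\Pi F]$ without spoiling the discriminant argument; and, in cases (4) and (6), checking that the two weighted interpolations combine to the \emph{minimum} of the two rates rather than a worse one. All integrations by parts are legitimised by the barrier $0\le f\le C\,f_\star$ of \eqref{Linfty}, the embedding $\mathrm L^1\subset\mathrm L^2(d\mu)$, and the elliptic-regularity and density remarks of Section~\ref{Sec:Notation}.
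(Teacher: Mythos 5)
Your strategy coincides with the paper's: part~(1) is Corollary~\ref{Cor:DMS}; in the remaining cases one keeps the twisted entropy $\mathsf H$, proves a weighted entropy--production inequality, propagates moments from the barrier $0\le f\le C\,f_\star$, interpolates by H\"older, and closes with a Bihari--LaSalle ODE. Your bookkeeping of the exponents, the observation that adding $\mathsf D^{\theta_v}\mathcal K^{1-\theta_v}+\mathsf D^{\theta_x}\mathcal K^{1-\theta_x}$ yields $\theta=\min\{\theta_v,\theta_x\}$ in the doubly degenerate cases, and the use of $r\mapsto r/(1-r)$ to read off the rate are all exactly what the paper does.

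The one genuine gap is the step you flag yourself and defer: the weighted dissipation estimate
\[
\mathsf D[f]\ \ge\ \kappa\Big(\|(1-\Pi)f\|_\beta^2+\bangle{\mathsf{AT}\Pi f,\Pi f}\Big)\,,
\]
the paper's Lemma~\ref{LemmaDissip}, is not a routine transcription of~\eqref{H4} and the Arnold et al.\ discriminant argument. It requires (a) computing $\bangle{\mathsf{AT}\Pi f,\Pi f}$ explicitly via~\eqref{ATPi} as $\sigma\irdx{|\nabla_xu|^2\rho_\star}+\sigma^2\irdx{|\nabla_x\cdot(\rho_\star\nabla_xu)|^2\rho_\star^{-1}}$ for the solution $u$ of~\eqref{u}; (b) a $\bangle v$-weighted version of~\eqref{Est3}, which in the regime $\beta<1$ rests on $|v|/\bangle v\le 1$ to absorb the $\bangle v^{\beta-2}$ factor of~\eqref{Tkin}; (c) a direct computation of $\mathsf L\mathsf T(u\,f_\star)$ and $\mathsf T^2(u\,f_\star)$, and an identity of Bochner type, the paper's~\eqref{ClainedIdentity}, converting $\irdx{|\mathrm{Hess}(u)-\nabla_xu\otimes\nabla_x\phi|^2\rho_\star}$ into the quantities controlled by~\eqref{ATPi}; and (d) the improved Poincar\'e estimate~\eqref{ImprPoinc}, $\irdx{|\nabla_xu|^2|\nabla_x\phi|^2\rho_\star}\le c_\alpha\irdx{|\nabla_xu|^2\rho_\star}$. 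Only after (a)--(d) does the discriminant argument close, and crucially it must be run against $\|(1-\Pi)f\|_\beta$ and $\bangle{\mathsf{AT}\Pi f,\Pi f}^{1/2}$ rather than against $\|(1-\Pi)f\|$ and $\|\Pi f\|$, which is precisely why the same Lemma serves all six cases. This is the technical heart of the theorem and occupies most of the paper's Section~\ref{Sec:EntropyProduction}.

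On the other hand, your Step~(i) is somewhat more careful than the paper's Lemma~\ref{Lem:UniformBounds} in the $\phi=0$ cases (5) and (6): there $f_\star$ is not integrable in $x$, so the crude bound $K_\ell(t)\le C^2\irdxv{\bangle v^\ell f_\star}$ is vacuous; reducing to the linear moment $\irdv{\bangle v^\ell g(t)}$ with $g=\irdx f$ solving $\partial_tg=\mathsf Lg$ and invoking the Fokker--Planck moment propagation of~\cite[Proposition~4]{BDLS} is indeed the natural fix, and worth making explicit.
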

%---------------------------------------------------------------------
In the statement of Theorem~\ref{Thm:Main}, even if it is not specified, the constants may depend on norms of $f_0$. See Table~\ref{Table:kFP2} for a summary of the results. Assumption~\eqref{Linfty} is a simplifying assumption which can be removed in various cases: see for instance~\cite{DMS,MR3488535,MR4069622,BDLS}. It allows an immediate conservation of moments along the flow, see Lemma~\ref{Lem:UniformBounds} below.

%---------------------------------------------------------------------
\begin{table}
\begin{center}
\begin{tabular}{|l|c||c|c|}
\hline
\cpbx{2.5}{Potential}&\cpbx{2.5}{$\phi=0$}&\cpbx{2.5}{$\phi(x)=\frac1\alpha\,\bangle x^\alpha$\\$\alpha\in(0,1)$}&\cpbx{2.75}{~\\$\phi(x)=\frac1\alpha\,\bangle x^\alpha$\\$\alpha\ge1$}\\
\hline
\cpbx{2.5}{$\psi(v)=\frac1\beta\,\bangle v^\beta$\\ $\beta\ge1$\\Micro Poincar\'e}&\cellcolor[gray]{0.9}\cpbx{2.5}{$t^{-d/2}$\\decay}
&\cellcolor[gray]{0.9}\cpbx{2.75}{$t^{-\frac k{2(1-\alpha)}}$\\ convergence}&\cellcolor[gray]{0.9}\cpbx{2.5}{$e^{-\lambda t}$\\convergence}\\\hline
\cpbx{2.5}{$\psi(v)=\frac1\beta\,\bangle v^\beta$\\ $\beta\in(0,1)$}&\cellcolor[gray]{0.9}\cpbx{2.5}{$t^{-\min\big\{\frac d2,\frac\ell{2(1-\beta)}\big\}}$\\convergence}
&\cellcolor[gray]{0.9}\cpbx{2.75}{$t^{-\min\big\{\frac k{2(1-\alpha)},\frac\ell{2(1-\beta)}\big\}}$\\ convergence}&\cellcolor[gray]{0.9}\cpbx{2.5}{$t^{-\frac \ell{2(1-\beta)}}$\\ convergence}\\\hline
\end{tabular}
\caption{\label{Table:kFP2} Summary of the results of Theorem~\ref{Thm:Main}. See the statement for the precise meaning of the rates and the assumptions.}
\end{center}
\end{table}
%---------------------------------------------------------------------
\noindent\emph{Remark.}
The results of Theorem~\ref{Thm:Main} can be extended to functions $\psi$ and $\phi$ depending monotonously on $|v|$ and~$|x|$ respectively, which behave like $\bangle{v}^\beta$ and $\bangle{x}^\alpha$ as $|v|\to+\infty$ and $|x|\to+\infty$. Typically, one has to assume that for any $v\in\R^d$,
\[
C_1\,\bangle{v}^\beta\leq \phi(v)\leq C_2\,\bangle{v}^\beta\,,\quad C_3\,|v|\,\bangle{v}^{\beta-1}\leq v\cdot\nabla_v\psi(v)\leq C_4\,|v|\,\bangle{v}^{\beta-1}\quad\mbox{and}\quad|\mathrm{Hess}(\psi)|(v)\leq C_5\,\bangle{v}^{\beta-2}
\]
for some positive constants $C_i$, with $i=1,\dots,5$, and similar estimates for $\phi$.
%---------------------------------------------------------------------

%%%%%%%%%%%%%%%%%%%%%%%%%%%%%%%%%%%%%%%%%%%%%%%%%%%%%%%%%%%%%%%%%%%%%%
\subsection{An estimate of the entropy production}\label{Sec:EntropyProduction}

Let us introduce the weighted norm defined by
\[
\|f\|_\beta:=\|f\|_{\mathrm L^2(\R^d\times\R^d,\,\bangle v^{-2(1-\beta)_+}\,d\mu)}
\]
where $(1-\beta)_+$ denotes the positive part of $1-\beta$. As a consequence $\|f\|_2$ denotes the standard norm with no weight and we keep using the notation $\bangle{\cdot,\cdot}$ for the associated scalar product. We can rephrase the Poincar\'e inequality~\eqref{Poincare:v} corresponding to the case $\beta\ge1$, and the weighted Poincar\'e inequality~\eqref{wPoincaré:x} rewritten in the variable $v$ with $\beta\in(0,1)$ instead of $\alpha$ and $\lambda_m=\mathscr C_\beta$, as
\[
-\,\bangle{\mathsf Lf,f}\ge\lambda_m\,\|(1-\Pi)f\|_\beta^2\,.
\]
In the language of Theorem~\ref{ThmAbs:DMS}, this inequality replaces~\eqref{H1} while~\eqref{H3} is still satified. Next we use the notation of Section~\ref{Sec:Abstract} for $\mathsf A$, $\mathsf H$ and~$\mathsf D$, with $\mathsf T$ and $\mathsf L$ given respectively by~\eqref{Tkin} and~\eqref{Lkin}. 
%---------------------------------------------------------------------
\begin{lemma}\label{LemmaDissip} For any $\beta>0$, there is a positive constant $\kappa$ such that
\be{Estimate:Dissipation}
\mathsf D[f]\ge\kappa \(\|(1-\Pi)f\|_\beta^2+\bangle{\mathsf{AT}\Pi f,\Pi f}\)\,.
\ee
\end{lemma}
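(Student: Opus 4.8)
The plan is to reproduce, in the weighted setting, the chain of estimates that led from \eqref{Dprod} to the entropy--entropy production inequality in Theorem~\ref{ThmAbs:DMS}, the difference being that \eqref{H1} has been replaced by the weighted microscopic coercivity estimate $-\bangle{\mathsf Lf,f}\ge\lambda_m\,\|(1-\Pi)f\|_\beta^2$ and that the three ``error'' terms $\mathrm{Re}\bangle{\mathsf{TA}f,f}$, $\mathrm{Re}\bangle{\mathsf{AT}(1-\Pi)f,f}$ and $\mathrm{Re}\bangle{\mathsf{AL}f,f}$ must now be controlled by $\|(1-\Pi)f\|_\beta$ and by $\bangle{\mathsf{AT}\Pi f,\Pi f}^{1/2}$ rather than by the unweighted $\|(1-\Pi)f\|$ and $\|\Pi f\|$. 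First I would recall from \eqref{D[F]} the identity $\mathsf D[f]=-\bangle{\mathsf Lf,f}+\delta\,\bangle{\mathsf{AT}\Pi f,f}-\delta\big(\mathrm{Re}\bangle{\mathsf{TA}f,f}-\mathrm{Re}\bangle{\mathsf{AT}(1-\Pi)f,f}+\mathrm{Re}\bangle{\mathsf{AL}f,f}\big)$, use the weighted \eqref{H1}-type bound on the first term, keep $\delta\,\bangle{\mathsf{AT}\Pi f,\Pi f}$ (note $\bangle{\mathsf{AT}\Pi f,f}=\bangle{\mathsf{AT}\Pi f,\Pi f}$ by self-adjointness of $\Pi$ and $\mathsf A=\Pi\mathsf A$), and reduce the lemma to showing that each of the three error terms is bounded by $C\big(\|(1-\Pi)f\|_\beta^2+\bangle{\mathsf{AT}\Pi f,\Pi f}\big)$ with a constant independent of $f$; then choosing $\delta$ small enough absorbs them.

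The core of the argument is therefore a set of elliptic estimates on the auxiliary operator $\mathsf A=\big(1+(\mathsf T\Pi)^*(\mathsf T\Pi)\big)^{-1}(\mathsf T\Pi)^*$. As in the computation following the definition of $\mathsf H$ in Section~\ref{Sec:Abstract}, writing $g=\mathsf Af$, i.e. $g+(\mathsf T\Pi)^*\mathsf T\Pi\,g=(\mathsf T\Pi)^*f$, gives $\|\mathsf Af\|^2+\|\mathsf{TA}f\|^2=\bangle{\mathsf{TA}f,(1-\Pi)f}$, which by Cauchy--Schwarz already yields $\|\mathsf Af\|$ and $\|\mathsf{TA}f\|$ bounded by $\|(1-\Pi)f\|$. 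In the present framework, using the explicit form from Section~\ref{Sec:Notation}, $g=u_g\,f_\star$ with $u_g$ solving the elliptic equation \eqref{ell-u}, $u_g-\sigma\,(\Delta_x u_g-\nabla_x\phi\cdot\nabla_x u_g)=u_f$; I would run weighted energy estimates on \eqref{ell-u} — multiplying by $u_g\,\rho_\star$, by $|x|^2\,u_g\,\rho_\star$ or by $\bangle v$-weights as needed — to control $\|\mathsf Af\|_\beta$, $\|\mathsf{TA}f\|_\beta$ and the relevant dual norms by $\|(1-\Pi)f\|_\beta$. The term $\bangle{\mathsf{AL}f,f}=\bangle{\mathsf Lf,\mathsf A^*f}$ requires in addition that $\mathsf L$ maps the range of $\mathsf A$ into something controlled by $\|(1-\Pi)f\|_\beta$; here one uses that $\mathsf A f$ is already a local equilibrium ($\mathsf Af=\Pi\mathsf Af$) together with the structure \eqref{Lkin} of $\mathsf L$ and the regularity of $u_g$ from \eqref{ell-u}. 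The term $\bangle{\mathsf{AT}(1-\Pi)f,f}$ is handled by writing $\mathsf T(1-\Pi)f$ explicitly, decomposing $\mathsf T=\nabla_v\mathscr E\cdot\nabla_x-\nabla_x\mathscr E\cdot\nabla_v$, integrating by parts the velocity derivatives against $\mathsf A^*f$, and using $\|\mathsf A^*f\|_\beta\lesssim\|(1-\Pi)f\|_\beta$ together with the Poincaré/weighted-Poincaré inequality in $v$ to absorb the $\nabla_v$ contribution into $\|(1-\Pi)f\|_\beta$; this is exactly where the weight $\bangle v^{-2(1-\beta)_+}$ in $\|\cdot\|_\beta$ is dictated, since for $\beta<1$ the commutator of $\mathsf T$ with $\Pi$ produces a factor $\bangle v^{\beta-1}$ whose square is the weight.

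The main obstacle I expect is precisely the $\beta\in(0,1)$ case of the bounded-auxiliary-operator estimates: unlike the Maxwellian case $\beta=2$, the transport operator \eqref{Tkin} carries the velocity weight $\bangle v^{\beta-2}$ in $\mathsf T\Pi f=\bangle v^{\beta-2}(v\cdot\nabla_x u_f)\,f_\star$, so the solution of the elliptic problem defining $\mathsf A$ does not gain the same integrability, and the Poincaré inequality in $v$ must be replaced by a weighted Poincaré inequality with weight $\bangle v^{-2(1-\beta)}$ — this is the content of the reformulation of \eqref{wPoincaré:x} ``rewritten in the variable $v$'' mentioned just before the lemma. Keeping track of the exact weights so that every integration by parts closes, and checking that the term $\mathrm{Re}\bangle{\mathsf{AT}(1-\Pi)f,f}$ does not lose a power of $\bangle v$ relative to $\|(1-\Pi)f\|_\beta$, is the delicate bookkeeping; once that is in place the discriminant argument from the proof of Theorem~\ref{ThmAbs:DMS} gives $\kappa>0$ for $\delta$ small, which is \eqref{Estimate:Dissipation}.
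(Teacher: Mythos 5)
Your overall plan follows the same architecture as the paper's proof: replace the microscopic coercivity \eqref{H1} by the weighted bound $-\langle\mathsf Lf,f\rangle\ge\lambda_m\,\|(1-\Pi)f\|_\beta^2$, keep $\delta\,\langle\mathsf{AT}\Pi f,\Pi f\rangle$ as the ``good'' macroscopic term, control the three error terms via the explicit elliptic characterization $\mathsf Af=u_g\,f_\star$ with $u_g$ solving \eqref{ell-u}, and close with a discriminant argument. You also correctly identify the $\beta\in(0,1)$ weight bookkeeping as the delicate point. So the approach is essentially the right one.

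Two technical points in your plan would not survive as stated, and both matter. First, you claim it suffices to show that each error term is bounded by $C\big(\|(1-\Pi)f\|_\beta^2+\langle\mathsf{AT}\Pi f,\Pi f\rangle\big)$ ``and then choosing $\delta$ small absorbs them.'' That cannot work: the good $\langle\mathsf{AT}\Pi f,\Pi f\rangle$ term and the error terms both carry the \emph{same} $\delta$ prefactor in \eqref{D[F]}, so if $C\ge1$ you would lose the macroscopic term for every $\delta>0$. What is actually needed, and what the paper proves in Steps 2--4, are bounds of the product form
\[
\big|\mathrm{Re}\langle\mathsf{TA}f,f\rangle\big|\lesssim\|(1-\Pi)f\|_\beta^2\,,\qquad
\big|\langle\mathsf{AT}(1-\Pi)f,\Pi f\rangle\big|+\big|\mathrm{Re}\langle\mathsf{AL}(1-\Pi)f,f\rangle\big|\lesssim\|(1-\Pi)f\|_\beta\,\langle\mathsf{AT}\Pi f,\Pi f\rangle^{1/2}\,,
\]
so that Young's inequality with a free parameter lets the $\langle\mathsf{AT}\Pi f,\Pi f\rangle$ contribution of the errors be made arbitrarily small while the $\|(1-\Pi)f\|_\beta^2$ contribution is absorbed by the $\delta$-independent good term $\lambda_m\,\|(1-\Pi)f\|_\beta^2$. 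Without this product structure the discriminant argument you invoke does not close.

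Second, your treatment of $\langle\mathsf{AT}(1-\Pi)f,f\rangle$ (``decompose $\mathsf T$, integrate by parts the velocity derivatives against $\mathsf A^*f$, use the Poincar\'e inequality in $v$'') points at the wrong side. That term equals $\langle(1-\Pi)f,-\mathsf T^2(u\,f_\star)\rangle$ with $u$ the solution of \eqref{ell-u} built from $\Pi f$, and the real work is a \emph{macroscopic, second-order elliptic} estimate on $u$: one must show that $\irdx{|\mathrm{Hess}(u)-\nabla_xu\otimes\nabla_x\phi|^2\,\rho_\star}$ and $\irdx{|\nabla_xu|^2\,|\nabla_x\phi|^2\,\rho_\star}$ are both controlled by $\langle\mathsf{AT}\Pi f,\Pi f\rangle$, which the paper obtains through the identity \eqref{ClainedIdentity} (rewriting the Hessian of $u$ as a divergence of $\rho_\star\nabla_xu$ and integrating by parts in $x$), the improved Poincar\'e inequality \eqref{ImprPoinc} for the $|\nabla_x\phi|^2$ weight, and the expression \eqref{ATPi} for $\langle\mathsf{AT}\Pi f,\Pi f\rangle$. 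The velocity variable contributes only a bounded moment constant $C_{\beta,3}=\irdv{|\nabla_v\psi|^4\bangle v^{2(1-\beta)_+}e^{-\psi}}\big/\irdv{e^{-\psi}}$. A Poincar\'e inequality in $v$ is not the tool here; the weight $\bangle v^{-2(1-\beta)_+}$ enters through Cauchy--Schwarz in $v$ when pairing $\mathsf T^2(u\,f_\star)$ against $(1-\Pi)f$, not through a $v$-Poincar\'e step. If you filled in your sketch as written, this step would not go through and you would miss the $H^2$-type estimate for $u$ which is the genuinely nontrivial ingredient of Step~4.
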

%---------------------------------------------------------------------
\begin{proof} We recall that by~\eqref{D[F]}, $\mathsf D$ is defined as
\[
\mathsf D[f]:=-\,\langle\mathsf Lf,f\rangle+\delta\,\langle\mathsf{AT}\Pi f,f\rangle-\,\delta\,\Big(\mathrm{Re}\langle\mathsf{TA}f,f\rangle-\mathrm{Re}\langle\mathsf{AT}(1-\Pi)f,f\rangle+\mathrm{Re}\langle\mathsf{AL}f,f\rangle\Big)\,.
\]
In order to prove Lemma~\ref{LemmaDissip}, we have to give estimates on the last three terms using~$\|(1-\Pi)f\|_\beta$ and $\bangle{\mathsf{AT}\Pi f,\Pi f}$. We obtain these estimates  in four steps, as follows.

\medskip\noindent\textit{\textbf{Step 1.} Expressions of $\bangle{\mathsf{AT}\Pi f,\Pi f}$.} We consider the function $u=u(x)$ implicitly defined by $u\,f_\star=\big(1+(\mathsf T\Pi)^\ast(\mathsf T\Pi)\big)^{-1}\,\Pi f$, that is, the solution of~\eqref{ell-u} that can be rewritten as
\be{u}
u-\frac\sigma{\rho_\star}\,\nabla_x\cdot\big(\rho_\star\,\nabla_xu\big)=u_f=\frac{\rho_f}{\rho_\star}\,.
\ee
We deduce from
\[
\mathsf{AT}\Pi f=\big(1+(\mathsf T\Pi)^\ast(\mathsf T\Pi)\big)(\mathsf T\Pi)^\ast(\mathsf T\Pi)f=\Pi f-\big(1+(\mathsf T\Pi)^\ast(\mathsf T\Pi)\big)^{-1}f=\Pi f-u\,f_\star=\(\frac{\rho_f}{\rho_\star}-u\)f_\star
\]
and~\eqref{u} that
\[
\bangle{\mathsf{AT}\Pi f,\Pi f}=-\,\sigma\irdx{\big(\nabla_x\cdot\big(\rho_\star\,\nabla_xu\big)\big)\,\rho_f}=-\,\sigma\irdx{\nabla_x\cdot\big(\rho_\star\,\nabla_xu\big)\(u-\frac\sigma{\rho_\star}\,\nabla_x\cdot\big(\rho_\star\,\nabla_xu\big)\)}\,,
\]
that is,
\be{ATPi}
\bangle{\mathsf{AT}\Pi f,\Pi f}=\sigma\irdx{|\nabla_x u|^2\,\rho_\star}+\sigma^2\irdx{\big|\nabla_x\cdot\big(\rho_\star\,\nabla_xu\big)\big|^2\,\rho_\star^{-1}}\,.
\ee
Testing~\eqref{u} with $u\,\rho_\star$, we learn after an integration by parts that
\[
\ird{|u|^2\,\rho_\star}+\sigma\ird{|\nabla u|^2\,\rho_\star}=\ird{u\,\rho_f}\le\(\ird{|u|^2\,\rho_\star}\)^{1/2}\,\|\Pi f\|
\]
where the inequality arises from a Cauchy-Schwarz estimate using $\|\Pi f\|^2=\ird{|\rho_f|^2\,\rho_\star^{-1}}$. Hence
\be{Test1}
\ird{|u|^2\,\rho_\star}\le\|\Pi f\|^2\quad\mbox{and}\quad\sigma\ird{|\nabla u|^2\,\rho_\star}\le\|\Pi f\|^2\,.
\ee

\medskip\noindent\textit{\textbf{Step 2.} An estimate of $|\bangle{\mathsf{TA} f,f}|$.} We know from~\eqref{Est3} that $|\bangle{\mathsf{TA} f,f}|\le\|(1-\Pi)f\|_2^2$ if $\beta\ge1$. With $\sigma$ defined by~\eqref{sigma}, we claim that
\be{Estm:TA}
\big|\mathrm{Re}\bangle{\mathsf{TA} f,f}\big|\le\frac1\sigma\,\|(1-\Pi)f\|_\beta^2
\ee
also holds if $\beta\in(0,1)$. In this later case, let us consider the function $w=w(x)$ implicitly defined by $w\,f_\star=\mathsf A f$, that is, the solution of 
\[
w-\frac\sigma{\rho_\star}\,\nabla_x\cdot(\rho_\star\,\nabla_xw)=-\,\frac1{\rho_\star}\nabla_x\cdot\irdv{v\,\bangle v^{\beta-2}f}\,.
\]
Testing with $w\,\rho_\star$ we obtain
\[
\sigma\irdx{|\nabla_xw|^2\,\rho_\star}\le\irdx{|w|^2\,\rho_\star}+\sigma\irdx{|{\nabla_xw}|^2\,\rho_\star}=\irdx{\nabla_xw\cdot\(\irdv{v\,\bangle v^{\beta-2}f}\)}\,.
\]
By applying the Cauchy-Schwarz inequality and after squaring, we obtain
\[
\sigma^2\irdx{|\nabla_xw|^2\,\rho_\star}\le\irdx{\(\irdv{v\,\bangle v^{\beta-2}(1-\Pi)f}\)^2\,\rho_\star^{-1}}\le\|(1-\Pi)f\|_\beta^2
\]
using $|v|/\bangle v\le1$ so that $\irdv{|v|^2\,\bangle v^{-2}\,e^{-\psi}}/\irdv{e^{-\psi}}\le1$. Altogether, we prove~\eqref{Estm:TA} with
\[
\|\mathsf{TA}f\,\bangle v^{(1-\beta)_+}\,\|_2^2=\|\mathsf T(w\,f_\star)\,\bangle v^{(1-\beta)_+}\,\|_2^2=\irdxv{\big|\tfrac v{\bangle v}\cdot\nabla_xw\big|^2\,\bangle v^{2(\beta-1)_+}\,f_\star}\le\frac1{\sigma^2}\,\|(1-\Pi)f\|_\beta^2\,.
\]

\medskip\noindent\textit{\textbf{Step 3.}} We claim that, for some explicit constant $C_\beta>0$,
\be{ALf}
\big|\mathrm{Re}\bangle{\mathsf{AL}(1-\Pi)f,f}\big|\le\frac{C_\beta}\sigma\,\|(1-\Pi)f\|_\beta\,\bangle{\mathsf{AT}\Pi f,\Pi f}\,.
\ee
This follows from a direct computation. Consider $u=u(x)$ defined by~\eqref{u} and observe that $\mathsf A^\ast\Pi f= \mathsf T(u\,f_\star)$. Then $\bangle{\mathsf{AL}(1-\Pi)f,\Pi f}=\big\langle(1-\Pi)f,\mathsf L \mathsf T(u\,f_\star)\big\rangle$. Since 
\[
\mathsf L\mathsf T(u\,f_\star)=\mathsf L\(\bangle v^{\beta-2}\,v\cdot\nabla_xu\,f_\star\)=\big(\xi(v)\cdot\nabla_xu\big)\,f_\star
\]
where $\xi(v):=\nabla_v\cdot\mathrm{Hess}_v(\psi)-\nabla_v \psi\cdot\mathrm{Hess}_v(\psi)$ is a vector valued function of $v$, it follows that
\[ 
\|\mathsf L\mathsf T(u\,f_\star)\bangle v^{(1-\beta)_+}\,\|^2_2\le C_\beta\irdx{|\nabla_xu|^2\,\rho_\star}\quad\mbox{with}\quad C_\beta=\irdv{\frac{|\xi(v)|^2\,\bangle v^{2(1-\beta)_+}\,e^{-\psi}}{\irdv{e^{-\psi}}}}\,.
\]
Then~\eqref{ALf} follows from~\eqref{ATPi}.

\medskip\noindent\textit{\textbf{Step 4.}} We claim that, for some explicit constant $C>0$,
\be{Ellip}
\big|\bangle{\mathsf{AT}(1-\Pi)f,\Pi f}\big|\le C\,\|(1-\Pi)f\|_\beta\,\bangle{\mathsf{AT}\Pi f,\Pi f}^{1/2}.
\ee
With $u=u(x)$ defined by~\eqref{u}, we have $(\mathsf{AT})^\ast\Pi f=-\,\mathsf T^2(u\,f_\star)$ and
\[
\bangle{\mathsf{AT}(1-\Pi)f, \Pi f}=\bangle{(1-\Pi)f,-\mathsf T^2(u\,f_\star)}\,.
\]
Using the expression~\eqref{Tkin} for $\mathsf{T}$, a computation yields 
\[
\mathsf T^2(u\,f_\star)=\Big(\nabla_v\psi\cdot \mathrm{Hess}_x(u)\cdot\nabla_v\psi-\nabla_xu\cdot\mathrm{Hess}_v(\psi)\cdot\nabla_x\phi\Big)\,f_\star\,.
\]
Adding and subtracting $(\nabla_v\psi\cdot\nabla_xu)\,(\nabla_v\psi\cdot\nabla_x\phi)$, we can rewrite
\[
\mathsf{T}^2(uf_\star)=\nabla_v\psi\cdot\big(\mathrm{Hess}(u)-\nabla_xu\otimes\nabla_x\phi\big)\cdot\nabla_v\psi\,f_\star-\nabla_xu\cdot\big(\mathrm{Hess}(\psi)-\nabla_v\psi\otimes\nabla_v\psi\big)\cdot\nabla_x\phi\,f_\star
\]
where $\nabla_xu\otimes\nabla_x\phi$ and $\nabla_v\psi\otimes\nabla_v\psi$ are respectively the matrices with entries $\big(\partial_{x_i}u\,\partial_{x_j}\phi\big)_{i,j}$ and $\big(\partial_{v_i}\psi\,\partial_{v_j}\psi\big)_{i,j}$\,. We estimate independently the two terms in the expression of $\mathsf{T}^2(uf_\star)$.
\\[4pt]
(1) The second term is estimated by
\begin{align*}
\big\|\nabla_xu\cdot\big(\mathrm{Hess}(\psi)-\nabla_v\psi\otimes\nabla_v\psi\big)&\cdot\nabla_x\phi\,f_\star\,\bangle v^{(1-\beta)_+}\big\|_2^2\\
&\le\irdxv{|\nabla_xu|^2\,\big|\,\mathrm{Hess}(\psi)-\nabla_v\psi\otimes\nabla_v\psi\big|^2\,|\nabla_x\phi|^2\,\bangle v^{2(1-\beta)_+}\,f_\star}\\
&\le C_{\beta,2}\irdx{|\nabla_xu|^2\,|\nabla_x\phi|^2\,\rho_\star}
\end{align*}
where $C_{\beta,2}=\irdv{\big|\,\mathrm{Hess}(\psi)-\nabla_v\psi\otimes\nabla_v\psi\big|^2\,\bangle v^{2(1-\beta)_+}\,e^{-\psi}}\big/\irdv{e^{-\psi}}$. Using the fact that $|\nabla_x\phi|^2$ is bounded for $\alpha\in(0,1)$ and~\cite[Lemma 8]{DMS} if $\alpha\ge1$, there is some constant $c_\alpha>0$ such that the solution of~\eqref{u} satisfies
\be{ImprPoinc}
\irdx{|\nabla_xu|^2\,|\nabla_x\phi|^2\,\rho_\star}\le c_\alpha\irdx{|\nabla_xu|^2\,\rho_\star}\,,
\ee
which, after using~\eqref{ATPi}, is enough to obtain the bound
\[
\big\|\nabla_xu\cdot\big(\mathrm{Hess}(\psi)-\nabla_v\psi\otimes\nabla_v\psi\big)\cdot\nabla_x\phi\,f_\star\,\bangle v^{(1-\beta)_+}\big\|_2^2\le\frac{c_\alpha\,C_{\beta,2}}\sigma\,\|(1-\Pi)f\|_\beta\,\bangle{\mathsf{AT}\Pi f,\Pi f}^{1/2}\,.
\]
(2) For the first term, we have
\begin{align*}
\big\|\nabla_v\psi\cdot\big(\mathrm{Hess}(u)-\nabla_xu\otimes\nabla_x\phi\big)&\cdot\nabla_v\psi\,f_\star\,\bangle v^{(1-\beta)_+}\big\|^2_2\\
&\le\irdxv{|\nabla_v\psi|^4\,\big|\,\mathrm{Hess}(u)-\nabla_xu\otimes\nabla_x\phi\big|^2\,\bangle v^{2(1-\beta)_+}\,f_\star}\\
&\le C_{\beta,3}\irdx{\big|\,\mathrm{Hess}(u)-\nabla_xu\otimes\nabla_x\phi\big|^2\,\rho_\star}
\end{align*}
where $C_{\beta,3}=\irdv{|\nabla_v\psi|^4\,\bangle v^{2(1-\beta)_+}\,e^{-\psi}}\big/\irdv{e^{-\psi}}$. Notice that $\mathrm{Hess}(u)-\nabla_xu\otimes\nabla_x\phi$ is the matrix with entries $\partial_{x_ix_j}u-\partial_{x_i}u\,\partial_{x_j}\phi=\partial_{x_i}\big(\rho_\star\,\partial_{x_j}u\big)\,\rho_\star^{-1}$ for $i,j=1,\dots,d$. Hence
\begin{multline}\label{ClainedIdentity}
\int_{\R^d}\big|\,\mathrm{Hess}(u)-\nabla_xu\otimes\nabla_x\phi\big|^2\,\rho_\star\,dx=\sum_{i,j=1}^d\irdx{\(\partial_{x_i}\big(\rho_\star\,\partial_{x_j}u\big)\)^2\,\rho_\star^{-1}}\\
=\irdx{\big|\nabla_x\cdot\big(\rho_\star\,\nabla_xu\big)\big|^2\,\rho_\star^{-1}}+\irdx{|\nabla_xu|^2\,\Delta_x\phi\,\rho_\star}-\irdx{\mathrm{Hess}(\phi):\nabla_xu\otimes\nabla_xu\,\rho_\star}\,.
\end{multline}
To prove~\eqref{ClainedIdentity}, it is indeed enough to notice that
\begin{align*}
\irdx{\(\partial_{x_i}(\rho_\star\,\partial_{x_j} u)\)^2\,\rho_\star^{-1}}&=-\irdx{\partial_{x_i}\phi\,\partial_{x_j}u\,\big(\partial_{x_i}(\rho_\star\,\partial_{x_j}u)\big)}+\irdx{\partial_{x_ix_j}^2u\,\big(\partial_{x_i}(\rho_\star\,\partial_{x_j}u)\big)}\,.
\end{align*}
The observation on the solutions of~\eqref{ell-u} in Section~\ref{Sec:Notation} applies. By integrating by parts, the two integrals are
\begin{align*}
\irdx{\partial_{x_ix_j}^2u\,\big(\partial_{x_i}(\rho_\star\,\partial_{x_j}u)\big)}=&\irdx{\partial_{x_ix_i}^2u\,\big(\partial_{x_j}(\rho_\star\,\partial_{x_j}u)\big)}\,,&\\
 -\irdx{\partial_{x_i}\phi\,\partial_{x_j}u\,\big(\partial_{x_i}(\rho_\star\,\partial_{x_j}u)\big)}=&\irdx{\partial_{x_ix_i}^2\phi\,\partial_{x_j}u\,\big(\rho_\star\,\partial_{x_j}u\big)}+\irdx{\partial_{x_i}\phi\,\partial_{x_ix_j}^2u\,\big(\rho_\star\,\partial_{x_j}u\big)}&\\
 =&\irdx{\partial_{x_ix_i}^2\phi\,\partial_{x_j}u\,\big(\rho_\star\,\partial_{x_j}u\big)}-\irdx{\partial_{x_i}\phi\,\partial_{x_i}u\partial_{x_j}\big(\rho_\star\,\partial_{x_j}u\big)}&\\
 &\hspace*{4.2cm}-\irdx{\partial_{x_jx_i}^2\phi\,\partial_{x_i}u\,\big(\rho_\star\,\partial_{x_j}u\big)}\,.
\end{align*}
Putting everything together, we get
\begin{align*}
 \irdx{\(\partial_{x_i}(\rho_\star\,\partial_{x_j} u)\)^2\,\rho_\star^{-1}}=\irdx{\partial_{x_ix_i}^2\phi\,(\partial_{x_j}u)^2\,\rho_\star}&+\irdx{\partial_{x_i}(\rho_\star\,\partial_{x_i}u)\,\partial_{x_j}\big(\rho_\star\,\partial_{x_j}u\big)}\\
 &-\irdx{\partial_{x_jx_i}^2\phi\,\partial_{x_i}u\,\partial_{x_j}u\,\rho_\star}
\end{align*}
and~\eqref{ClainedIdentity} is obtained by summing over $i$ and $j$. Finally all integrals are estimated by $\bangle{\mathsf{AT}\Pi f,\Pi f}$ using~\eqref{ATPi} and the improved Poincaré inequality~\eqref{ImprPoinc}, which completes the proof of~\eqref{Ellip}.
\\[4pt]
In all cases, we conclude that $\|\mathsf T^2(u\,f_\star)\bangle v^{(1-\beta)_+}\,\|_2^2\le C\,\bangle{\mathsf{AT}\Pi f,\Pi f}$ using~\eqref{ATPi}, for some explicit constant $C>0$. This completes the proof of~\eqref{Ellip}.

\medskip\noindent\textit{\textbf{Conclusion.}} By~\eqref{Estm:TA},~\eqref{ALf} and~\eqref{Ellip}, 
we control $|\bangle{\mathsf{TA} f,f}|$, $\big|\mathrm{Re}\bangle{\mathsf{AL}(1-\Pi)f,f}\big|$ and $\big|\bangle{\mathsf{AT}(1-\Pi)f,\Pi f}\big|$. A discriminant condition on $\delta$ completes the proof of Lemma~\ref{LemmaDissip} as in the proof of Theorem~\ref{ThmAbs:DMS}.\qed\end{proof}

%%%%%%%%%%%%%%%%%%%%%%%%%%%%%%%%%%%%%%%%%%%%%%%%%%%%%%%%%%%%%%%%%%%%%%
\subsection{Moment estimates}\label{Sec:Moments}

%---------------------------------------------------------------------
\begin{lemma}\label{Lem:uBounds}
Let $u=u(x)$ be defined in terms of $f$ as in~\eqref{u} and assume For any $k\ge 0$, there exists a constant $C_k>0$ such that
\be{MomentsBound}
M_k:=\irdx{|u|^2\,\bangle x^k\,\rho_\star}\le C_k\irdxv{|\Pi f|^2\,\bangle x^k\,f_\star^{-1}}\,.
\ee
\end{lemma}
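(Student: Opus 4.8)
The plan is to multiply the elliptic equation~\eqref{u} by $u\,\bangle x^k\,\rho_\star$ and integrate, exactly as in the weighted estimates of Section~\ref{Sec:Diffusion}, but tracking the weight $\bangle x^k$ instead of $e^{-\phi}$ since here $\rho_\star=e^{-\phi}/\irdx{e^{-\phi}}$ already carries the Gibbs measure. Recalling that $\irdxv{|\Pi f|^2\,f_\star^{-1}}=\irdx{|\rho_f|^2\,\rho_\star^{-1}}=\irdx{|u_f|^2\,\rho_\star}$, and more generally $\irdxv{|\Pi f|^2\,\bangle x^k\,f_\star^{-1}}=\irdx{|u_f|^2\,\bangle x^k\,\rho_\star}$, the claim is that the moment $M_k$ of $u$ is controlled by the corresponding moment of the right-hand side $u_f$ of~\eqref{u}. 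This is a purely elliptic statement: it says the resolvent $\big(1+(\mathsf T\Pi)^*(\mathsf T\Pi)\big)^{-1}$, which in the $u$-variable is $\big(1-\sigma\,\rho_\star^{-1}\nabla_x\cdot(\rho_\star\nabla_x\cdot)\big)^{-1}$, is bounded on $\mathrm L^2(\R^d,\bangle x^k\,\rho_\star\,dx)$.

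Concretely, first I would test~\eqref{u} with $u\,\bangle x^k\,\rho_\star$ to obtain
\[
\irdx{|u|^2\,\bangle x^k\,\rho_\star}+\sigma\irdx{\nabla_xu\cdot\nabla_x\big(u\,\bangle x^k\big)\,\rho_\star}=\irdx{u_f\,u\,\bangle x^k\,\rho_\star}\,.
\]
Expanding $\nabla_x(u\,\bangle x^k)=\bangle x^k\,\nabla_xu+k\,u\,\bangle x^{k-2}\,x$, the gradient term splits into $\sigma\irdx{|\nabla_xu|^2\,\bangle x^k\,\rho_\star}$, which is nonnegative and can be dropped, plus a cross term $\sigma\,k\irdx{u\,\bangle x^{k-2}\,(x\cdot\nabla_xu)\,\rho_\star}$. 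The latter I would handle by a further integration by parts: moving the $\nabla_xu$ onto the rest produces terms of the form $\irdx{|u|^2\,\bangle x^{k-2}\,(\cdots)\,\rho_\star}$ where $(\cdots)$ involves $d$, $k$, $x\cdot\nabla_x\phi$ and $\bangle x^{-2}$; since $\phi(x)=\frac1\alpha\,\bangle x^\alpha$ one has $x\cdot\nabla_x\phi=|x|^2\,\bangle x^{\alpha-2}\ge0$, so this contribution has a favourable sign for large $|x|$ and the remainder is a lower-order moment $M_{k-2}$ (up to $M_{k'}$ with $k'<k$ in the logarithmic case). Then a Young inequality on the right-hand side, $\irdx{u_f\,u\,\bangle x^k\,\rho_\star}\le\frac12\,M_k+\frac12\irdx{|u_f|^2\,\bangle x^k\,\rho_\star}$, absorbs $\frac12M_k$ into the left side, leaving $M_k\lesssim M_{k-2}+\irdx{|u_f|^2\,\bangle x^k\,\rho_\star}$; the term $M_{k-2}$ is itself bounded, by the already-established case $k=0$ (inequality~\eqref{Test1}) together with an induction on $k$ (or, for non-integer $k$, interpolation between the $k=0$ bound and a single higher moment via Hölder's inequality, as in the weighted Poincaré argument above).

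The main obstacle is the sign and size control of the cross term coming from differentiating the weight $\bangle x^k$, i.e. making rigorous that the "bad" part of $\sigma\,k\irdx{u\,\bangle x^{k-2}\,(x\cdot\nabla_xu)\,\rho_\star}$ is genuinely lower order and that the leftover has the right sign; this is the place where the precise form of $\phi$ (or the growth assumptions in the Remark) enters, and one must be careful that for small $\alpha$ or in the logarithmic regime the constant $C_k$ stays finite. A secondary, more technical point is the justification of the integrations by parts: as noted in Section~\ref{Sec:Notation}, one uses $c\,f_\star$ as a barrier so that $u$ is bounded, the elliptic regularity gives enough decay of $\nabla_x u$ against $\bangle x^k\,\rho_\star$, and a density/truncation argument (cutting off at $|x|=R$ and letting $R\to\infty$) closes the estimate rigorously.
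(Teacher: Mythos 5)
Your proposal is correct and essentially matches the paper's own proof: the paper squares~\eqref{u} first and then tests against $\bangle x^k\,\rho_\star$ (so the right-hand side is directly $\irdx{u_f^2\,\bangle x^k\,\rho_\star}$ and no Young inequality is needed), whereas you test the unsquared equation against $u\,\bangle x^k\,\rho_\star$ and use Young to absorb $\tfrac12 M_k$; both manipulations produce, after the same integration by parts on the cross term and after dropping the nonnegative contributions including the one coming from $x\cdot\nabla_x\phi\ge0$, the same recursion $M_k\lesssim M_{k-2}+M_{k+\alpha-4}+\irdx{|u_f|^2\,\bangle x^k\,\rho_\star}$, closed by induction and interpolation together with the base case $k=0$ from~\eqref{Test1}.
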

%---------------------------------------------------------------------
\begin{proof}
The case $k=0$ is true from~\eqref{Test1}. By squaring equation~\eqref{u} and testing with $\bangle x^k\,\rho_\star$, we obtain
\[
\irdx{u^2\,\bangle x^k\,\rho_\star}+2\,\sigma\irdx{\nabla_x\big(u\,\bangle x^k\big)\cdot\nabla_xu\,\rho_\star}+\sigma^2\irdx{|\nabla_x(\rho_\star\,\nabla_xu)|^2\,\frac{\bangle x^k}{\rho_\star}}=\irdx{\rho_f^2\,\frac{\bangle x^k}{\rho_\star}}\,.
\]
Moreover, after an integration by parts, we have
\begin{align*}
2\,\sigma\irdx{\nabla_x(u\,\bangle x^k)\cdot\nabla_xu\,\rho_\star}&\,=2\,\sigma\irdx{|\nabla_xu|^2\,\bangle x^k\,\rho_\star}-\sigma\irdx{|u|^2(\Delta_x\bangle x^k-\nabla_x\bangle x^k\cdot\nabla_x\phi)\,\rho_\star}\\
&\,=2\,\sigma\irdx{|\nabla_xu|^2\,\bangle x^k\,\rho_\star}-\sigma\, k\,(k+d-2)\,M_{k-2}+\sigma\, k\,(k-2)\,M_{k-4}\\
&\quad+\sigma\,k\,M_{k+\alpha-2}-\sigma\,k\,M_{k+\alpha -4}\,.
\end{align*}
After dropping the positive terms we get
\[
M_k\le\irdx{\frac{\rho_f^2}{\rho_\star}\,\bangle x^k}+\sigma\,k\,(k+d-2)\,M_{k-2}+\sigma\,k\,M_{k+\alpha-4}\,.
\]
Inequality~\eqref{MomentsBound} follows by induction and interpolation.
\end{proof}
Under the simplifying assumption~\eqref{Linfty}, we also obtain moment estimates directly for the distribution function~$f$. Here there is space for improvements.
%--------------------------------------------------------------------------------
\begin{lemma}\label{Lem:UniformBounds} Let $f=f(t,x,v)$ be a solution of~\eqref{kFP} with transport and collision operators given respectively by~\eqref{Tkin} and~\eqref{Lkin} for some $\beta>0$ and $\alpha>0$. Assume that the initial datum $f_0$ satisfies the bound~\eqref{Linfty}. Then for any $k>0$ and for any $\ell>0$ there exist positive constants $C_k$ and $C_\ell$ such that, for any $t\ge0$
\begin{align}
J_k(t):=\irdxv{|f(t,x,v)|^2\,\bangle x^k\,f_\star^{-1}}\le C_k\,,\label{Jk}\\
K_\ell(t):=\irdxv{|f(t,x,v)|^2\,\bangle v^\ell\,f_\star^{-1}}\le C_\ell\,.\label{Kh}
\end{align}
\end{lemma}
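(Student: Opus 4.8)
The plan is to establish the two bounds \eqref{Jk} and \eqref{Kh} by a Gr\"onwall-type argument on the weighted $\mathrm L^2$ norms, exploiting the simplifying assumption \eqref{Linfty} to handle the terms that are not a priori sign-definite. First I would note that under \eqref{Linfty} one has $0\le f(t,\cdot,\cdot)\le C\,f_\star$ for all $t\ge0$ by the maximum principle (since $C\,f_\star$ is a stationary supersolution and $0$ a stationary subsolution), so that $\irdxv{|f|^2\,\bangle x^k\,f_\star^{-1}}\le C\irdxv{f\,\bangle x^k}$; hence it suffices to control the $\mathrm L^1$-type moments $\irdxv{f\,\bangle x^k}$ and $\irdxv{f\,\bangle v^\ell}$, which is considerably simpler. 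Actually, to keep things in $\mathrm L^2$, I would alternatively work directly with $J_k(t)$ and $K_\ell(t)$ and differentiate in time.

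For \eqref{Jk}, I would compute $\frac d{dt}J_k$ by testing \eqref{kFP} against $f\,\bangle x^k\,f_\star^{-1}$. The collision term $\bangle{\mathsf Lf,f\,\bangle x^k\,f_\star^{-1}}$ produces, after integration by parts in $v$, a nonpositive Dirichlet-type term (the weight $\bangle x^k$ does not interact with $\nabla_v$), so it can be dropped. The transport term $\bangle{\mathsf Tf,f\,\bangle x^k\,f_\star^{-1}}$ is the crucial one: since $\mathsf T$ is anti-Hermitian in $\mathrm L^2(d\mu)$, we have $\bangle{\mathsf Tf,f\,f_\star^{-1}}=\tfrac12\irdxv{f^2\,f_\star^{-1}\,\mathsf T^*1}=0$ for the unweighted piece, and the weighted piece reduces to $-\tfrac12\irdxv{f^2\,f_\star^{-1}\,\mathsf T(\bangle x^k)}= -\tfrac12\irdxv{f^2\,f_\star^{-1}\,\bangle v^{\beta-2}\,v\cdot\nabla_x\bangle x^k}$. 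The factor $\nabla_x\bangle x^k$ grows like $\bangle x^{k-1}$ and $\bangle v^{\beta-2}|v|\le\bangle v^{\beta-1}$, so this term is bounded by $c_k\irdxv{f^2\,f_\star^{-1}\,\bangle x^{k-1}\,\bangle v^{\beta-1}}$. The plan is then to absorb the extra $\bangle v^{\beta-1}$ using that $\bangle v^{\beta-1}\,f_\star$ is integrable in $v$ against any polynomial weight when $\beta>0$ (recall $f_\star$ has exponential tails in $v$ for $\beta>0$, super-exponential actually), together with the pointwise bound $f\le C\,f_\star$, to get $\frac d{dt}J_k\le c_k\,J_{k-1}+c_k'\le c_k\,J_k^{(k-1)/k}\,\big(\irdxv{f^2f_\star^{-1}}\big)^{1/k}+c_k'$ by H\"older; since $\irdxv{f^2f_\star^{-1}}\le C\nrmxv{f_0}1$ is conserved-bounded, an induction on $k$ and a Gr\"onwall/Bihari argument closes \eqref{Jk}.

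The estimate \eqref{Kh} is handled the same way but with the roles reversed. Testing \eqref{kFP} against $f\,\bangle v^\ell\,f_\star^{-1}$, the collision term now does interact with the weight: integrating by parts in $v$ gives a nonpositive Dirichlet term plus a remainder of the form $\irdxv{f^2\,f_\star^{-1}\,\big(c\,\Delta_v\bangle v^\ell - c'\,v\bangle v^{\beta-2}\cdot\nabla_v\bangle v^\ell\big)}$; here the dominant contribution $-c'\,\ell\,\bangle v^{\beta+\ell-2}|v|^2\bangle v^{-2}$ is \emph{negative} for large $|v|$ and actually improves the estimate, while the lower-order remainder is $O(\bangle v^{\ell-2})$ plus $O(\bangle v^{\beta+\ell-4})$, both of which are controlled by $K_{\ell-2}$ or absorbed. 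The transport term contributes $-\tfrac12\irdxv{f^2\,f_\star^{-1}\,\mathsf T(\bangle v^\ell)}=\tfrac12\irdxv{f^2\,f_\star^{-1}\,\bangle v^{\ell}\text{-weight}\cdot\nabla_x\phi\cdot\nabla_v\text{-type}}$, i.e. a term involving $\nabla_x\phi\sim\bangle x^{\alpha-1}$ times $\bangle v^{\ell-1}$; using $f\le Cf_\star$ one extracts polynomial-in-$x$ decay from $f_\star$ (when $\alpha>0$, $e^{-\phi}$ has tails that beat any polynomial), reducing everything to lower-order moments. Induction on $\ell$ then gives \eqref{Kh}.

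The main obstacle I anticipate is the bookkeeping of the cross-weighted terms: the transport operator \eqref{Tkin} is not the classical one, so $\mathsf T(\bangle x^k)$ carries the unfamiliar factor $\bangle v^{\beta-2}v$, and one must verify carefully that for every $\beta>0$ and $\alpha>0$ the velocity (resp. position) integrals $\irdv{\bangle v^{\,p}\,e^{-\psi}}$ (resp. $\irdx{\bangle x^{\,q}\,e^{-\phi}}$) that arise are finite — which is exactly where $\beta>0$ and $\alpha>0$ (rather than $\ge1$) suffice, because $e^{-\psi}$ and $e^{-\phi}$ still have stretched-exponential tails. The other delicate point is making the induction self-consistent: each differential inequality for $J_k$ involves $J_{k-1}$ (or $J_{k-2}$) and the conserved quantity $J_0\lesssim\nrmxv{f_0}1$, so one must start from $J_0$ (which is \eqref{Test1}-type, i.e.\ already known) and climb up, at each stage invoking a H\"older interpolation $M_{k-1}\le M_k^{1-1/k}M_0^{1/k}$ to turn the right-hand side into a sub-linear function of $J_k$ and then integrating the resulting Bernoulli-type ODE to obtain a bound that is in fact \emph{uniform} in $t$ (not merely growing), as claimed. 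Once these finiteness and induction details are in place, the proof is routine; I would also remark that the same computation, combined with Lemma~\ref{Lem:uBounds}, immediately transfers the moment control from $f$ to the auxiliary function $u$ used in the hypocoercivity estimate.
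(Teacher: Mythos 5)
You actually identify the paper's entire proof in your very first sentence, and then wander away from it. Once the maximum principle gives $0\le f(t,\cdot,\cdot)\le C\,f_\star$ for all $t\ge0$, you already have the pointwise bound $|f|^2\,f_\star^{-1}\le C^2\,f_\star$, and integrating this against $\bangle x^k$ or $\bangle v^\ell$ directly yields
\[
J_k(t)\le C^2\irdxv{\bangle x^k\,f_\star}\,,\qquad K_\ell(t)\le C^2\irdxv{\bangle v^\ell\,f_\star}\,,
\]
both of which are finite constants because $e^{-\phi}$ and $e^{-\psi}$ have stretched-exponential tails for every $\alpha>0$ and $\beta>0$. This is exactly the paper's proof, and it requires no differential inequality, no Gr\"onwall argument, no induction on $k$ or $\ell$, and no interpolation. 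You observe that the bound reduces things to $\mathrm L^1$ moments of $f$, but then fail to notice that those are themselves immediately bounded by the same $f\le C\,f_\star$ estimate; the proof is complete right there.

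The alternative Gr\"onwall route you sketch is not just overkill --- it is internally circular. When you try to control the transport contribution to $\tfrac d{dt}J_k$, the extra velocity weight $\bangle v^{\beta-1}$ cannot be absorbed by the weight in $J_k$ alone, and you say you would invoke $f\le C\,f_\star$ to handle it. But if you are going to use that $\mathrm L^\infty$-type bound anyway, you gain nothing from the time differentiation; you might as well apply it directly as above. In other words, the differential-inequality argument does not eliminate reliance on assumption \eqref{Linfty}, so it cannot be presented as a genuinely different or more general approach here --- it simply re-derives, at much greater cost, a consequence of the bound you already had. If you wished to prove moment propagation \emph{without} the strong assumption \eqref{Linfty} (which the paper explicitly flags as a simplifying assumption to be relaxed), a differential-inequality argument in the spirit of what you wrote would indeed be the right tool; but as a proof of the lemma as stated, it is the wrong level of effort, and the bookkeeping you would need for the cross-weighted transport terms and the induction on $k$ and $\ell$ is not actually carried out.
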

%--------------------------------------------------------------------------------
\begin{proof}
Since $f_\star$ is a stationary solution, the maximum principle yields 
\[
f(t,\cdot,\cdot)\le C\,f_\star\quad \forall t \ge 0.
\]
Therefore~\eqref{Jk} and~\eqref{Kh} follow by taking 
\[
C_k=C^2\irdxv{f_\star\,\bangle x^k}\quad\mbox{and}\quad C_\ell=C^2\irdxv{f_\star\,\bangle v^\ell}\,.
\]
This completes the proof of Lemma~\ref{Lem:UniformBounds}.
\end{proof}

Notice that, with the elementary estimates
\begin{align*}
&2^{\frac k2-1}\,\big(1+r^k\big)\le\bangle r^k\le1+r^k\quad&\mbox{if}\quad k\in(0,2)\,,\\
&1+r^k\le\bangle r^k\le2^{\frac k2-1}\big(1+r^k\big)\quad&\mbox{if}\quad k\ge2\,,
\end{align*}
we have the simple moment estimate
\begin{multline*}
M_{k,\eta}:=\irdx{\bangle x^k\,e^{-\frac1\eta\,\bangle x^\eta}}\le\max\left\{1,\,2^{\frac k2-1}\right\}\,\big|\mathbb S^{d-1}\big|\int_0^\infty r^{d-1}\,\big(1+r^k\big)\,e^{-\frac{r^\eta}\eta}\,dr\\
=\max\left\{1,\,2^{\frac k2-1}\right\}\,\frac{2\,\pi^\frac d2}{\Gamma(\frac d2)}\,\eta^\frac{d-\eta}\eta\(\Gamma\big(\tfrac d\eta\big)+\eta^\frac k\eta\,\Gamma\big(\tfrac{d+k}\eta\big)\)
\end{multline*}
for any $k>0$ and $\Gamma$ is the Euler Gamma function. As a consequence, $f_\star$ defined by~\eqref{fstar} with $Z=M_{0,\beta}\,M_{0,\alpha}$ is such that
\[
\irdxv{\bangle x^k\,f_\star}=\frac{M_{k,\alpha}}{M_{0,\alpha}}\quad\mbox{and}\quad\irdxv{\bangle v^\ell\,f_\star}=\frac{M_{k,\beta}}{M_{0,\beta}}\,.
\]

%%%%%%%%%%%%%%%%%%%%%%%%%%%%%%%%%%%%%%%%%%%%%%%%%%%%%%%%%%%%%%%%%%%%%%
\subsection{Proof of \texorpdfstring{Theorem~\ref{Thm:Main}}{Theorem Main}}\label{Sec:ThmMain}

In this section we will work in the framework of Theorem~\ref{Thm:Main}, i.e. we will consider a solution $f=f(t,x,v)$ to the kinetic Fokker-Planck equation~\eqref{kFP} with initial datum $0\le f_0\le C\,f_\star$, for a certain $C>0$. As sign plays no role, up to replacing $f$ with $f-f_\star$ when $f_\star$ is integrable, we may assume that
\be{ZeroMass}
\irdxv{f(t,x,v)}=0\quad\forall\,t\ge 0\,.
\ee
We distinguish various cases depending on the values of $\beta$ and $\alpha$.

%%%%%%%%%%%%%%%%%%%%%%%%%%%%%%%%%%%%%%%%%%%%%%%%%%%%%%%%%%%%%%%%%%%%%%
\subsubsection*{\texorpdfstring{$\bullet$}{bullet} Case \texorpdfstring{$\beta\ge 1$ and $\alpha\ge 1$}{alpha≥1 and beta≥1}}

Thanks to Lemma~\ref{LemmaDissip}, we have 
\[
\mathsf D[f]\ge\kappa \(\nrmmu{(1-\Pi)f}2^2 + \bangle{\mathsf{AT}\Pi f,\Pi f}\)\,.
\]
for some $\kappa>0$. Because of the assumption $\alpha\ge1$ the operator $(\mathsf T \Pi)^\ast(\mathsf T \Pi)$ is coercive, that is~\eqref{H2} hold. As a consequence we also have~\eqref{gap}, \emph{i.e.}, 
\[
\bangle{\mathsf{AT}\Pi f,\Pi f}\ge \frac{\lambda_M}{1+\lambda_M}\,\nrmmu{\Pi f}2^2\,.
\]
Therefore
\[
\mathsf D[f]\ge \frac{\kappa\,\lambda_M}{1+\lambda_M}\,\nrmmu f2^2
\]
holds for some $\kappa>0$, which gives exponential convergence:
\[
\nrmmu{f(t,\cdot,\cdot)}2^2\le\frac4{2-\delta}\,\mathsf H[f(t,\cdot,\cdot)]\le\frac4{2-\delta}\,\mathsf H[f_0]\,e^{-\lambda t} \quad\mbox{where}\quad \lambda = \frac{\kappa\,\lambda_M}{1+\lambda_M}\,.
\]
Exactly the same proof applies in the case of Corollary~\ref{Cor:DMS}, with $\lambda_M$ now given by the Poincar\'e inequality associated with the measure $e^{-\phi}\,dx$, of which the case $\phi(x)=\frac1\alpha\,\bangle x^\alpha$ with $\alpha\ge1$ is a special case.

%%%%%%%%%%%%%%%%%%%%%%%%%%%%%%%%%%%%%%%%%%%%%%%%%%%%%%%%%%%%%%%%%%%%%%
\subsubsection*{\texorpdfstring{$\bullet$}{bullet} Case \texorpdfstring{$\beta\in(0,1)$ and $\alpha\ge 1$}{beta in (0,1) and alpha≥1}}

In this case, on the one hand we still have macroscopic coercivity~\eqref{H2} due to the fact that $\alpha\ge 1$, but on the other hand, a loss of weight now appears for the microscopic component because of $\beta\in(0,1)$. Inequality~\eqref{Estimate:Dissipation} now reads as
\[
\mathsf D[f]\ge\kappa \(\|(1-\Pi)f\|^2_{\mathrm L^2(\R^d\times\R^d,\bangle v^{-2(1-\beta)}\,d\mu)} + \frac{\lambda_M}{1+\lambda_M}\,\nrmmu f2^2\)\,.
\]
In order to recover the $\mathrm L^2(\R^d\times\R^d,d\mu)$ norm we need to interpolate with the conservation of moments. Let $\ell>0$ and notice that $f_0\in \mathrm L^2(\R^d\times\R^d,\bangle v^\ell d\mu)$ by our assumption on the initial datum~\eqref{Linfty}. Setting $a=\ell/\big(\ell+2(1-\beta)\big)$, by H\"older's inequality and Lemma~\ref{Lem:UniformBounds}, we have
\begin{align*}
\nrmmu{(1-\Pi)f}2^2 &\le \|(1-\Pi)f\|^{2a}_{\mathrm L^2(\R^d\times\R^d,\bangle v^{-2(1-\beta)}\,d\mu)} \, \|(1-\Pi)f\|^{2(1-a)}_{\mathrm L^2(\R^d\times\R^d,\bangle v^\ell\,d\mu)}\\
&\le \|(1-\Pi)f\|_{\mathrm L^2(\R^d\times\R^d,\bangle v^{-2(1-\beta)}\,d\mu)}^{2a}\,K_\ell(t)^{1-a}\\
&\le C_\ell^{1-a}\,\|(1-\Pi)f\|_{\mathrm L^2(\R^d\times\R^d,\bangle v^{-2(1-\beta)}\,d\mu)}^{2a}\,.
\end{align*}
As a consequence, for a certain constant $C>0$ we have
\[
\mathsf D[f]\ge C\(\nrmmu{(1-\Pi)f}2^{2\(1+\frac{2(1-\beta)}\ell\)}+\nrmmu{\Pi f}2^2\)\ge C\,\nrmmu f2^{2\(1+\frac{2(1-\beta)}\ell\)}\,.
\]
By the Bihari-LaSalle estimate, we finally have
\[
\mathsf H[f]\le\mathsf H[f_0]\(1+C\,\mathsf H[f_0]^{\frac{2(1-\beta)}\ell}\,t\)^{-\frac\ell{2(1-\beta)}}\,.
\]

%%%%%%%%%%%%%%%%%%%%%%%%%%%%%%%%%%%%%%%%%%%%%%%%%%%%%%%%%%%%%%%%%%%%%%
\subsubsection*{\texorpdfstring{$\bullet$}{bullet} Case \texorpdfstring{$\beta\ge1$ and $\alpha\in(0,1)$}{beta in (0,1) and alpha in (0,1)}}

In this case we have the symmetrical situation compared to the previous one. The dissipation of entropy is 
\[
\mathsf D[f]\ge \kappa\(\nrmmu{(1-\Pi)f}2^2+\bangle{\mathsf{AT}\Pi f,\Pi f}\)
\]
where now $\bangle{\mathsf{AT}\Pi f,\Pi f}$ does not produce macroscopic coercivity, but is given in terms of $u$ by~\eqref{ATPi}. Fix $k>0$ and assume that $f_0\in \mathrm L^2(\R^d\times\R^d,\bangle x^k\,d\mu)$, then from Lemma~\ref{Lem:uBounds} and~\ref{Lem:UniformBounds}, we have that the moments $M_k(t)=\irdx{|u|^2\,\bangle x^k\,\rho_\star}$ are uniformly bounded in time. With $b= k/\big(k+2(1-\alpha)\big)\in(0,1)$, using H\"older's inequality and the weighted Poincaré inequality with non-classical average of~\cite[Cor. 10]{BDLS}, we obtain
\begin{align*}
 \|\Pi f\|_{\mathrm L^2(\R^d\times\R^d,d\mu)}^2&=\irdx{|u|^2\,\rho_\star}+2\,\sigma\irdx{|\nabla_x u|^2\,\rho_\star}+\sigma^2\irdx{|\nabla_x\cdot(\rho_\star\,\nabla_x u)|^2\,\rho_\star}\\
 &\le \(\irdx{|u|^2\,\bangle x^{-2(1-\alpha)}\,\rho_\star}\)^b\(\irdx{|u|^2\,\bangle x^k\,\rho_\star}\)^{1-b}+2\,\bangle{\mathsf{AT}\Pi f,\Pi f}\\
 &\le \big(\mathscr C_\alpha^{\mathrm{wP}}\big)^b\(\irdx{|\nabla_x u|^2\,\rho_\star}\)^b\(\irdx{|u|^2\,\bangle x^k\,\rho_\star}\)^{1-b}+2\,\bangle{\mathsf{AT}\Pi f,\Pi f}\\
 &\le C\,\bangle{\mathsf{AT} \Pi f, \Pi f}^b +2\,\bangle{\mathsf{AT}\Pi f,\Pi f}=:\Phi\big(\bangle{\mathsf{AT} \Pi f, \Pi f}\big)
\end{align*}
where $C$ depends on $\mathscr C_\alpha^{\mathrm{wP}}$ and the bound on $\|f_0\|_{\mathrm L^2(\R^d\times\R^d,\bangle x^k\,d\mu)}$. In the weighted Poincaré inequality we used~\eqref{ZeroMass}, hence $\irdx{u\,\rho_\star}=\irdxv f=0$. Now we have
\begin{align*}
\mathsf H[f]\le \frac{2+\delta}4\,\|f\|^2_{\mathrm L^2(\R^d\times\R^d,d\mu)}&\le \frac{2+\delta}4\(\|(1-\Pi)f\|^2_{\mathrm L^2(\R^d\times\R^d,d\mu)}+ \Phi\big(\bangle{\mathsf{AT} \Pi f, \Pi f}\big) \)\\
&\le \frac{2+\delta}4\,\Phi\(\|(1-\Pi)f\|^2_{\mathrm L^2(\R^d\times\R^d,d\mu)}+ \bangle{\mathsf{AT} \Pi f, \Pi f} \)\\
&\le \frac{2+\delta}4\,\Phi\(\kappa^{-1}\,\mathsf D[f] \)\,.
\end{align*}
Therefore the decay of $\mathsf H[f]$ is estimated by the decay of the solution $z(t)$ of 
\[
z'=\frac{dz}{dt}=-\,\kappa\,\Phi^{-1}\(\frac{4\,z}{2+\delta}\)\,,\quad z(0)=\mathsf H[f_0]\,.
\]
In view of the expression of $\Phi$, we conclude that $z$ monotonically converges to $0$ as $t\to+\infty$ and, as a consequence $z'$ also converges to $0$. This implies that after some time $t_0\ge0$, we have
\[
\Phi\(-\,\kappa^{-1}\,z'\)\le C \(-\,\kappa^{-1}\,z'\)^b\,,
\]
Where $C$ denotes a positive constant that may change from line to line. Altogether, we end up with the differential inequality
\[
z'\le\,-\,C\,z^{1/b}\,.
\]
Integrating and using $\frac{b}{1-b}=\frac k{2(1-\alpha)}$, we obtain that $z(t)\le C\(1+t\)^{-\frac k{2(1-\alpha)}}$.

%%%%%%%%%%%%%%%%%%%%%%%%%%%%%%%%%%%%%%%%%%%%%%%%%%%%%%%%%%%%%%%%%%%%%%
\subsubsection*{\texorpdfstring{$\bullet$}{bullet} Case \texorpdfstring{$\beta\in(0,1)$ and $\alpha\in(0,1)$}{beta in (0,1) and alpha in (0,1)}}

If $\beta\in(0,1)$ and $\alpha\in(0,1)$ we have neither microscopic coercivity nor macroscopic coercivity. The dissipation of entropy is 
\[
\mathsf D[f]\ge \kappa\(\|(1-\Pi)f\|_{\mathrm L^2(\R^d\times\R^d,\bangle v^{-2(1-\beta)}\,d\mu)}^2+\bangle{\mathsf{AT}\Pi f,\Pi f}\)
\]
and we have to interpolate with moments in both variables $x$ and $v$. As in the previous cases, we have
\[
\nrmmu{\Pi f}2^2\le C\,\bangle{\mathsf{AT} \Pi f, \Pi f}^b +2\,\bangle{\mathsf{AT}\Pi f,\Pi f}= \Phi\big(\bangle{\mathsf{AT}\Pi f,\Pi f}\big)
\]
and
\begin{align*}
\nrmmu{(1-\Pi)f}2^2&\le C\,\|(1-\Pi)f\|_{\mathrm L^2(\R^d\times\R^d,\bangle v^{-2(1-\beta)}\,d\mu)}^{2a}=:\Psi\(\|(1-\Pi)f\|^2_{\mathrm L^2(\R^d\times\R^d,\bangle v^{-2(1-\beta)}\,d\mu) }\)
\end{align*}
with $a=\frac\ell{\ell+2(1-\beta)}$ and $b=\frac k{k+2(1-\alpha)}$. As above we have
\begin{align*}
\mathsf H[f] &\le \frac{2+\delta}4\(\Psi\(\|(1-\Pi)f\|^2_{\mathrm L^2(\R^d\times\R^d,\bangle v^{-2(1-\beta)}\,d\mu)}\)+ \Phi\big(\bangle{\mathsf{AT} \Pi f, \Pi f} \big) \)\\
&\le \frac{2+\delta}4\(\Psi\big(\kappa^{-1}\,\mathsf D[f]\big)+ \Phi\big(\kappa^{-1}\,\mathsf D[f]\big) \)\,.
\end{align*}
Notice that the function $t\mapsto\Psi(t)+\Phi(t)$ is increasing, concave and $\Psi(0)+\Phi(0)=0$. Moreover we have
\[
\frac d{dt}\mathsf H[f(t,\cdot,\cdot)]\le -\,\kappa\,(\Psi+\Phi)^{-1}\(\frac4{2+\delta}\,\mathsf H[f(t,\cdot,\cdot)]\)\,.
\]
As a consequence, $\mathsf H[f(t,\cdot,\cdot)]$ can be estimated by the solution $z$ of
\[
z'=-\,\kappa\,(\Psi+\Phi)^{-1}\(\frac{4\,z}{2+\delta}\)\,.
\]
For the same reasons as before, $z'$ converges to $0$ as $t\to+\infty$. Using the explicit expressions of $\Phi$ and $\Psi$, we see that there exists some $t_0\ge0$ such that, for any $t\ge t_0$,
\[
(\Psi+\Phi)\(-\,\kappa^{-1}\,z'\)\le C\(-\,\kappa^{-1}\,z'\)^\zeta
\]
for some $C>0$, where $\zeta=\min\{a,b\}$. This inequality leads to $z'\le-\,C\,z^{1/\zeta}$ and therefore to
\[
z(t)\le C\,(1+t)^{-\min\big\{\frac k{2(1-\alpha)},\frac\ell{2(1-\beta)}\big\}}\quad\forall\,t\ge0
\]
by the Bihari-LaSalle estimate.

%%%%%%%%%%%%%%%%%%%%%%%%%%%%%%%%%%%%%%%%%%%%%%%%%%%%%%%%%%%%%%%%%%%%%%%%%%%%%%%%%%
\subsubsection*{\texorpdfstring{$\bullet$}{bullet} Case \texorpdfstring{$\beta\ge 1$ and $\phi=0$}{beta≥1 and alpha=0}}

In absence of a global equilibrium, we can still consider~\eqref{Estimate:Dissipation} written with $\rho_\star=1$. Identity~\eqref{ATPi} now reads as 
\[
\bangle{\mathsf{AT}\Pi f,\Pi f}=\sigma\irdx{|\nabla_xu|^2}+\sigma^2\irdx{|\Delta_xu|^2}\,.
\]
Because of Nash's inequality and the conservation of mass, we have
\begin{align*}
\|\Pi f\|_{\mathrm L^2(\R^d\times\R^d,d\mu)}^2&=\|u\|^2_{\mathrm L^2(dx)}+2\,\sigma\|\nabla_xu\|^2_{\mathrm L^2(dx)}+\sigma^2\|\Delta_x u\|^2_{\mathrm L^2(dx)}\\
&\le C_{\mathrm{Nash}}\,\|u\|_{\mathrm L^1(dx)}^{\frac4{d+2}}\,\|\nabla_xu\|_{\mathrm L^2(dx)}^{\frac{2d}{d+2}}+2\,\bangle{\mathsf{AT}\Pi f,\Pi f}\\
&\le C\,\bangle{\mathsf{AT}\Pi f,\Pi f}^{\frac d{d+2}}+2\,\bangle{\mathsf{AT}\Pi f,\Pi f}\,.
\end{align*}
In the asymptotic regime of small $\nrmmu{\Pi f}2^2$, we have
\[
\bangle{\mathsf{AT}\Pi f,\Pi f}\ge C\,\|\Pi f\|^{2\(1+\frac2d\)}
\]
for some suitable constant $C>0$, and
\[
 \mathsf D[f]\ge C \(\|(1-\Pi)f\|_{\mathrm L^2(\R^d\times\R^d,d\mu)}^2 + \|\Pi f\|^{2\(1+\frac2d\)}_{\mathrm L^2(\R^d\times\R^d,d\mu)}\)
 \ge C\,\|f\|^{2\(1+\frac2d\)}\,.
\]
By the Bihari-LaSalle estimate, we can finally conclude 
\[
\mathsf H[f(t,\cdot,\cdot)]\le\mathsf H[f_0]\(1+C\,\mathsf H[f_0]^{\frac2d}\,t\)^{-\frac d2}\forall\,t\ge0\,.
\]

%%%%%%%%%%%%%%%%%%%%%%%%%%%%%%%%%%%%%%%%%%%%%%%%%%%%%%%%%%%%%%%%%%%%%%
\subsubsection*{\texorpdfstring{$\bullet$}{bullet} Case \texorpdfstring{$\beta\in(0,1)$ and $\phi=0$}{beta in (0,1) and alpha=0}}

We proceed as in the previous case. The macroscopic part obeys the same estimate
\[
\bangle{\mathsf{AT}\Pi f,\Pi f}\gtrsim C\,\|\Pi f\|^{2\(1+\frac2d\)}\,.
\]
The microscopic component has to be interpolated with moments:
\[
\nrmmu{(1-\Pi)f}2^2 \le C\,\|(1-\Pi)f\|_{\mathrm L^2(\R^d\times\R^d,\bangle v^{-2(1-\beta)}\,d\mu)}^{2b}\,.
\]
We conclude that, as $\nrmmu f2\to 0$,
\[
\mathsf D[f]\ge C\(\nrmmu{(1-\Pi)f}2^{2\(1+\frac{2(1-\beta)}k\)} + \nrmmu{\Pi f}2^{2\(1+\frac2d\)}\)\ge C\,\nrmmu f2^{2\(1+\frac1\zeta\)}
\]
where $\zeta=\min\{\frac k{2(1-\beta)},\frac d2\}$. By the Bihari-LaSalle estimate we conclude
\[
\mathsf H[f]\le\mathsf H[f_0]\(1+C\,\mathsf H[f_0]^\frac1\zeta\,t\)^{-\zeta}\,.
\]

%%%%%%%%%%%%%%%%%%%%%%%%%%%%%%%%%%%%%%%%%%%%%%%%%%%%%%%%%%%%%%%%%%%%%%
%%%%%%%%%%%%%%%%%%%%%%%%%%%%%%%%%%%%%%%%%%%%%%%%%%%%%%%%%%%%%%%%%%%%%%
\begin{acknowledgement}
L.Z.~received funding from the European Union’s Horizon 2020 research and innovation program under the Marie Skłodowska-Curie grant agreement No.~945322. J.D.~thanks S.~Menozzi, A.~Pascucci and S.~Polidoro for the organization of the meeting on \emph{Kolmogorov operators and their applications} which has turned to be a great opportunity to write down and collect the results of the present contribution. All authors thank an anonymous referee for his/her valuable suggestions and remarks.
\par\smallskip\noindent{\scriptsize\copyright\,\the\year\ by the authors. This paper may be reproduced, in its entirety, for non-commercial purposes. \href{https://creativecommons.org/licenses/by/4.0/legalcode}{CC-BY 4.0}}
\end{acknowledgement}
\newpage

%%%%%%%%%%%%%%%%%%%%%%%%%%%%%%%%%%%%%%%%%%%%%%%%%%%%%%%%%%%%%%%%%%%%%%

\end{document}